\newtheorem{theorem}{Theorem}[section]
\theoremstyle{definition}
\newtheorem{proposition}[theorem]{Proposition}
\theoremstyle{remark}
\newtheorem{remark}[theorem]{Remark}
\numberwithin{equation}{section}
\begin{document} %{\fns \today~submitted to~JMAA \hfill} %%%%%%
%%%%%%%%%%%%%%%%%%%%%%%%%%%%%%%%%%%%%%%%%%%%%%%%%%%%%%%%%%%%%%

\title[]
 {Minimal surfaces in a unit sphere pinched by intrinsic curvature and normal curvature}
%----------Author 1
\author [YANG Dan]{YANG Dan}

\address{School of Mathematics,
         Liaoning University,
          Shenyang,
       China}

\email{dlutyangdan@126.com}

%%%%%%%%%%%%%%%%%%%%%%%%%%%%%%%%%%%%%%%%%%%%%%%%%%%%%%%%%%%%%%
%\thanks{Email addresses: yu{\_}fu@yahoo.cn
% \ and \  zhhou@dlut.edu.cn}
%%%%%%%%%%%%%%%%%%%%%%%%%%%%%%%%%%%%%%%%%%%%%%%%%%%%%%%%%%%%%%
%\thanks{\dag~The corresponding author}zh
%\address{The Corresponding Author   \newline
%    Professor CHU Wenchang      \newline
%    Dipartimento di Matematica  \newline
%    Universit\`{a} del Salento  \newline
%    Lecce-Arnesano~P.~O.~Box~193\newline
%    73100 Lecce,~~ITALIA        \newline
%    tel 39+0832+297409          \newline
%    fax 39+0832+297594          \newline
%    Email \emph{chu.wenchang@unile.it}}
\subjclass[2000]{Primary 53C42; Secondary 53A10} \keywords{Minimal
surface; normal curvature; Gauss curvature; pinching}

%%%%%%%%%%%%%%%%%%%%%%%%%%%%%%%%%%%%%%%%%%%%%%%%%%%%%%%%%%%%%%
%%%%%%%%%%%%%%%%%%%%%%%%%%%%%%%%%%%%%%%%%%%%%%%%%%%%%%%%%%%%%%

\begin{abstract}
We establish a nice orthonormal frame field on a closed surface
minimally immersed in a unit sphere $S^{n}$, under which the shape
operators take very simple forms. Using this frame field, we obtain
an interesting property $K+K^{N}=1$ for the Gauss curvature $K$ and
the normal curvature $K^{N}$ if the Gauss curvature is positive.
Moreover, using this property we obtain the pinching on the
intrinsic curvature and normal curvature, the pinching on the normal
curvature, respectively.
\end{abstract}

\maketitle
%%%%%%%%%%%%%%%%%%%%%%%%%%%%%%%%%%%%%%%%%%%%%%%%%%%%%%%%%%%%%%%%%
%%%%%%%%%%%%%%%%%%%%%%%%%%%%%%%%%%%%%%%%%%%%%%%%%%%%%%%%%%%%%%%%%
\markboth{Dan Yang}{Minimal Surfaces in a Unit Sphere}
\thispagestyle{empty}%%%%%%%%\setcounter{page}{0}%%%%%%%%%%%%%%%%

%%%%%%%%%%%%%%%%%%%%%%%%%%%%%%%%%%%%%%%%%%
\section{Introduction}
%%%%%%%%%%%%%%%%%%%%%%%%%%%%%%%%%%%%%%%%%%%
Let $x:M\rightarrow S^{n}$ be a minimal immersion of a closed
surface $M$ into an n-dimensional unit sphere $S^{n}$. Let
$K_{s}={2}/(s(s+1))$ for each natural number $s$. Using an idea of
Hopf and the global coordinates on $S^{n}$, Calabi \cite{calabi}
proved that if $M$ is a 2-sphere with constant Gauss curvature
$K_{s}$ and and $x$ is linearly full, then $n=2s$ and $x$ is
congruent to $s$-th standard minimal immersion. For more general
minimal immersion of 2-sphere into $S^{n}$, Chern [4, p. 38]
obtained an important equality about some local invariants by
choosing a local orthonormal frame filed on $S^{n}$. As a special
case, Chern showed that if the Gauss curvature $K$ is constant then
$K=K_{s}$. Furthermore, for general minimal immersion of a surface
into $S^{n}$, Kenmotsu \cite{kenmotsu1,kenmotsu2} also obtained an
important equality (see Theorem 1 in \cite{kenmotsu1}) by choosing
the frame filed, which generalized Chern's result for the minimal
immersion of 2-sphere into $S^{n}$. We could observe that the chosen
of the frame filed plays an important role in studying the minimal
surface. The first purpose of this paper is to establish a best
local orthonormal frame field on the closed surface minimally
immersed in $S^{n}$ with positive Gauss curvature, under which the
shape operators take the most simple forms, see Theorem \ref{thm1}
for detail.

Furthermore, using the frame field introduced in  Theorem
\ref{thm1}, we can obtain an interesting result $K+K^{N}=1$ for the
Gauss curvature $K$ and the normal curvature $K^{N}$, which means
that closed minimal surfaces immersed in $S^{n}$ with positive Gauss
curvature and flat or nowhere flat normal bundle are Wintgen ideal
surfaces, see Theorem \ref{thm4}. Wintgen ideal submanifolds are a
family of submanifolds satisfying a DDVV type inequality when the
equality holds true exactly, see \cite{Chen} for instance.

Recently, a remarkable result due to Baker and Nguyen \cite{baker}
says that codimensional two surfaces satisfying a nonlinear
curvature condition depending on normal curvature smoothly evolve by
mean curvature flow to round points. In the course of estimating the
nonlinearity in the Simons identity, the authors announced an
interesting result depending on a pointwise pinching of the
intrinsic and normal curvatures.
\begin{theorem}\label{thm77} {\rm ({\cite{baker}})}
 Suppose a two surface $M$ minimally immersed in $\mathbb{S}^{4}$
 satisfies $K^{N}\leq2|K|$. Then either\\
 $(1)$ $S=0$
and $M$ is a geodesic sphere; or\\
 $(2)$ $S\neq0$, in which
case either\\
 \indent $(a)$ $K^{N}=0$ and the surface is the
Clifford torus, or\\
\indent $(b)$  $K^{N}\neq0$ and it is the Veronese surface.
\end{theorem}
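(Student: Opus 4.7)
The plan is to combine the Gauss equation for minimal surfaces in $S^{4}$ with Theorem \ref{thm4} to convert the pointwise pinching $K^{N}\le 2|K|$ into a uniform bound on $S$, and then invoke classical rigidity results (Simons, Chern--do Carmo--Kobayashi, Calabi) to identify $M$ up to congruence.

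For a minimal surface in $S^{4}$ the Gauss equation gives $K = 1 - S/2$, so $S = 2(1-K)$. If $S\equiv 0$, the immersion is totally geodesic and $M$ is a great $2$-sphere, which is case~(1). From now on assume $S\not\equiv 0$ and split the analysis according to whether the normal bundle is flat.

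If $K^{N}\not\equiv 0$, I would first show $K^{N}$ is nowhere zero using a unique-continuation / strong-maximum-principle argument (the main obstacle, see below). Theorem \ref{thm4} then applies wherever $K>0$ and yields $K+K^{N}=1$, i.e.\ $K^{N}=S/2$. The hypothesis $K^{N}\le 2K$ becomes $S/2 \le 2(1-S/2)$, so $S\le 4/3$ on $\{K>0\}$; a continuity argument extends this bound to all of $M$. The Chern--do Carmo--Kobayashi pinching theorem for codimension-two minimal surfaces in $S^{4}$ then forces $S\equiv 4/3$ (since $S\not\equiv 0$), and the equality clause identifies $M$ with the Veronese surface, giving case~(2b). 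If instead $K^{N}\equiv 0$, the normal bundle is flat, and a holonomy reduction lets the immersion factor through a totally geodesic $S^{3}\subset S^{4}$, so $M$ becomes a closed minimal hypersurface in $S^{3}$. The simple form of the shape operators produced by Theorem \ref{thm1} together with Simons' identity and the hypothesis $S\not\equiv 0$ then forces $S\equiv 2$, and the Simons--Lawson rigidity identifies $M$ with the Clifford torus, giving case~(2a).

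The main obstacle is the structural step in the first subcase: upgrading $K^{N}\not\equiv 0$ to $K^{N}>0$ everywhere. Because Theorem \ref{thm4} is only available on $\{K>0\}$, the identity $K+K^{N}=1$ cannot be invoked globally, and the behaviour of $K^{N}$ near its putative zero set has to be analysed directly from the Codazzi and Ricci equations together with the pinching $K^{N}\le 2|K|$; a strong-maximum-principle applied to the elliptic equation satisfied by $K^{N}$ under these constraints should propagate positivity from one point to the whole surface. A secondary, more technical point is justifying the normal-bundle reduction to the equatorial $S^{3}$ in the flat case, which requires an orientability check and a parallel normal frame argument. Once these two points are in hand the remaining steps are standard pinching rigidity.
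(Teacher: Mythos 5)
First, a structural observation: the paper does not actually prove Theorem \ref{thm77}; it is quoted from Baker--Nguyen \cite{baker}, and the paper's own contribution is the generalization Theorem \ref{thm7}, which is proved only under the \emph{additional} hypothesis that the normal bundle is flat or nowhere flat. Your proposal reproduces the skeleton of that proof (use $K+K^{N}=1$ from Theorem \ref{thm4} to convert the pinching into $S\le 4/3$, then invoke the $S$-pinching rigidity of Theorem \ref{thm2}), but the step you yourself flag as ``the main obstacle'' --- upgrading $K^{N}\not\equiv 0$ to $K^{N}>0$ everywhere --- is exactly the point the paper does not resolve either: it is written into the hypotheses of Theorem \ref{thm7} rather than deduced. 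Your proposed fix, a strong maximum principle for ``the elliptic equation satisfied by $K^{N}$,'' is not available as stated. In $S^{4}$ one has $K^{N}=|R_{3412}|$, the absolute value of a smooth function, which is not even $C^{1}$ across its zero set, and no elliptic equation for it is derived anywhere; the identities \eqref{e}, \eqref{f} and \eqref{eq1} all presuppose the frame of Theorem \ref{thm1}, whose construction already requires $b\neq 0$ (nowhere-flat normal bundle) and $K>0$. So the dichotomy ``$K^{N}\equiv 0$ or $K^{N}>0$'' remains an assumption, and the proposal is not a proof of the stated theorem.

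Two further points would need repair even granting that dichotomy. The hypothesis contains $|K|$, so $K$ may a priori be negative; Theorem \ref{thm4} and the identity $K^{N}=S/2$ are only available where $K>0$, and the ``continuity argument'' extending $S\le 4/3$ across $\{K\le 0\}$ cannot work as described: $S\le 4/3$ forces $K\ge 1/3$, so one must show $\{K\le 0\}$ is empty outright, which is circular without a new idea. In the flat case, the inequality $K^{N}\le 2|K|$ is vacuous (it reads $0\le 2|K|$), so nothing forces $K\ge 0$ there; the paper's Theorem \ref{thm7} avoids this by assuming $K^{N}\le 2K$ \emph{without} the absolute value, which gives $S\le 2$ and lets the Simons identity $\tfrac12\Delta S=P+(2-S)S\ge 0$ close the argument by integration. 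Your alternative route through a codimension reduction to $S^{3}$ is also not automatic: a flat normal bundle alone does not place the surface in a totally geodesic $S^{3}$, and closed minimal surfaces in $S^{3}$ with $S\not\equiv 0$ are not all Clifford tori, so some curvature restriction must enter --- and the quoted hypothesis supplies none in this subcase.
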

By using the frame field obtained in Theorem \ref{thm1}, we
generalize this result to the minimal surfaces in arbitrary
dimension unit sphere $S^{n}$. We prove that closed minimal surfaces
immersed in $S^{n}$ with nonnegative Gauss curvature and flat or
nowhere flat normal bundle satisfying $K^{N}\leq2K$ are geodesic
sphere,
 the Clifford torus, or the Veronese surface in $S^{4}$, see Theorem
\ref{thm7} for detail.

Based on this result, we continue to consider the next pinching
$2K\leq K^{N}\leq5K$, see Theorem \ref{thm8}. Then we study the
first pinching of normal curvature $0\leq K^{N}\leq 2/3$, see
Theorem \ref{thm5}, and the next pinching $2/3\leq K^{N}\leq 5/6$,
see Theorem \ref{thm6}. At last, we prove that closed surfaces
minimally immersed in $S^{n}$ with positive Gauss curvature and
non-zero constant normal curvature are generalized Veronese surfaces
studied by Calabi \cite{calabi} and do-Carmo-Wallach \cite{wallach}.

The paper is organized as follows. In Section 2, we introduce some
basic formulae for theory of submanifolds and establish an
orthonormal frame field on the closed surfaces minimally immersed in
a unit sphere, which is crucial to get the main theorems. In Section
3, we give some pinching theorems and their proofs.
%%%%%%%%%%%%%%%%%%%%%%%%%%%%%%%%%%%%%%%%%%%%%%%%%%%%%%%%%%%%%%%%%%%%%%%%%%%%%%%
\section{\bf Basic formulae and the frame field}\label{sec:proof1}
%%%%%%%%%%%%%%%%%%%%%%%%%%%%%%%%%%%%%%%%%%%%%%%%%%%%%%%%%%%%%%%%%%%%%%%%%%%%%%%
 Let $M$ be a closed surface immersed in a
unit sphere $S^{n}$. We identify $M$ with its immersed image, agree
on the following index ranges:
$$
1\leq i, j, k, l, m ,\cdots\leq 2;\quad 3\leq
\alpha,\beta,\gamma,\delta, \cdots \leq n;\quad 1\leq A, B, C,
D,\cdots\leq n,
$$
and use the Einstein convention. We take a local orthonormal frame
field $\{e_{1},\cdots,e_{n}\}$ in $TS^{n}$ such that, restricted to
$M$, at each point of $M$, $\{e_{1},e_{2}\}$ lies in the tangent
bundle $T(M)$ and $\{e_{3},\cdots,e_{n}\}$ in the normal bundle
$N(M)$. Let $\{\omega_{1},\cdots,\omega_{n}\}$ be the dual coframe
field of $\{e_{1},\cdots,e_{n}\}$ and $(\omega_{AB})$ the Riemannian
connection form matrix associated with
$\{\omega_{1},\cdots,\omega_{n}\}$. Then $(\omega_{ij})$ defines a
Riemannian connection in $T(M)$ and $(\omega_{\alpha\beta})$ defines
a normal connection in $N(M)$. The second fundamental form of $M$
can be expressed as
$$II=\omega_{i}\otimes\omega_{i\alpha}\otimes e_{\alpha}
=h_{ij}^{\alpha}\omega_{i}\otimes\omega_{j}\otimes e_{\alpha},$$
where
$$\omega_{i\alpha}=h_{ij}^{\alpha}\omega_{j};\qquad
h_{ij}^{\alpha}=h_{ji}^{\alpha}.
$$
Let $L^{\alpha}=(h_{ij}^{\alpha})_{2\times 2}$. We denote the square
of the norm of the second fundamental form $S$ by
$$S=\sum_{(\alpha,i,j)}(h_{ij}^{\alpha})^{2}.$$
The mean curvature vector field of $M$ is expressed as
$$
h=\frac{1}{2}\sum_{\alpha=3}^{n}(h^{\alpha}_{11}+h^{\alpha}_{22})
e_{\alpha},
$$
then $M$ is minimal if and only if $h=0$. The Riemannian curvature
tensor $\{R_{ijkl}\}$ and the normal curvature tensor
$\{R_{\alpha\beta kl}\}$ are expressed as
\begin{eqnarray}\label{eq2}
R_{ijkl}=(\delta_{ik}\delta_{jl}-\delta_{il}\delta_{jk})
+h_{ik}^{\alpha}h_{jl}^{\alpha}-h_{il}^{\alpha}h_{jk}^{\alpha},\quad
R_{\alpha\beta
kl}=h_{km}^{\alpha}h_{ml}^{\beta}-h_{lm}^{\alpha}h_{mk}^{\beta}.
\end{eqnarray}
 We
denote the normal scalar curvature $K^{N}$ by
$$K^{N}=\frac{1}{2}\sqrt{\sum_{(\alpha,\beta,i,j)}(R_{\alpha\beta ij})^{2}}.$$
 The first and the second order covariant derivatives of
$\{h_{ij}^{\alpha}\}$, say$\{h_{ijk}^{\alpha}\}$ and
$\{h_{ijkl}^{\alpha}\}$ are defined as follows:
$$\nabla h_{ij}^{\alpha}=h_{ijk}^{\alpha}\omega_{k}=dh_{ij}^{\alpha}
+h_{mj}^{\alpha}\omega_{mi}+h_{im}^{\alpha}\omega_{mj}+h_{ij}^{\beta}\omega_{\beta\alpha},$$
$$\nabla
h_{ijk}^{\alpha}=h_{ijkl}^{\alpha}\omega_{l}=dh_{ijk}^{\alpha}
+h_{mjk}^{\alpha}\omega_{mi}+h_{imk}^{\alpha}\omega_{mj}+h_{ijm}^{\alpha}\omega_{mk}+h_{ijk}^{\beta}\omega_{\beta\alpha}.$$
Then we have the  Codazzi equation
\begin{eqnarray}
h_{ijk}^{\alpha}=h_{ikj}^{\alpha},\label{abc}
\end{eqnarray}
and the Ricci's formula
\begin{eqnarray}
h_{ijkl}^{\alpha}-h_{ijlk}^{\alpha}=h_{pj}^{\alpha}R_{pikl}
+h_{ip}^{\alpha}R_{pjkl}+h_{ij}^{\beta}R_{\beta\alpha kl}.\label{a}
\end{eqnarray}
The Laplacian of $\{h_{ij}^{\alpha}\}$ and $\{h_{ijk}^{\alpha}\}$
are defined by
$$
\Delta h_{ij}^{\alpha}=h_{ijmm}^{\alpha},\quad \Delta
h_{ijk}^{\alpha}=h_{ijkmm}^{\alpha}.
$$
It follows from (\ref{abc}) and (\ref{a})  that
\begin{eqnarray}
\Delta
h_{ij}^{\alpha}=h_{mmij}^{\alpha}+h_{pi}^{\alpha}R_{pmjm}+h_{mp}^{\alpha}R_{pijm}
+ h_{mi}^{\delta}R_{\delta\alpha jm},\label{aa}
\end{eqnarray}
\begin{eqnarray}
 \Delta
h_{ijk}^{\alpha}&=& (\Delta h_{ij}^{\alpha})_{k}
+2h_{pjm}^{\alpha}R_{pikm}+2h_{ipm}^{\alpha}R_{pjkm}+h_{ijp}^{\alpha}R_{pmkm}\label{bb}\nonumber\\
&+&h_{pj}^{\alpha}R_{pikm,m}+h_{ip}^{\alpha}R_{pjkm,m}+2h_{ijm}^{\delta}R_{\delta\alpha
km} +h_{ij}^{\delta}R_{\delta\alpha km,m}.
\end{eqnarray}

 In the following we
will choose an orthonormal frame field on the closed surfaces
minimally immersed in a unit sphere, under which the shape operators
have very simple forms.

 If the normal bundle of $M$ immersed in $S^{n}$ is flat, the shape
operator $L^{\alpha}$ with respect to $e_{\alpha}$ can be
diagonalized simultaneously for $\alpha=3,\cdots,n$. Otherwise, at
least one of $h^{\beta}_{12}$ is not zero. Choosing a unit normal
vector field $\widetilde e_3=e/|e|$ where
$e=\sum_{\beta=3}^{n}h^{\beta}_{12}e_{\beta}$, and taking an
orthogonal transformation in the normal space $N_x(M)$, we have
$$
\begin{pmatrix}
\widetilde{e}_{3}\\
\\
\widetilde{e}_{4}\\
\\
\vdots \\
\\
\widetilde{e}_{n}
\end{pmatrix}
=\begin{pmatrix}
{h^{3}_{12}}{|e|^{-1}} & {h^{4}_{12}}{|e|^{-1}} & \cdots & {h^{n}_{12}}{|e|^{-1}}\\
\\
a_{43} & a_{44} & \cdots & a_{4n}  \\
\\
\vdots & \vdots &        & \vdots \\
\\
a_{n3} & a_{n4} & \cdots & a_{nn}
\end{pmatrix}
\begin{pmatrix}
e_{3} \\
\\
e_{4}\\
\\
\vdots \\
\\
e_{n}
\end{pmatrix}.
$$
Let $\widetilde{L}^{\alpha}=(\widetilde{h}^\alpha_{ij})$ be the
shape operator with respect to $\widetilde{e}_{\alpha}$. Then

$$\left\{
\begin{array}{l}
h_{ij}^{3}={h_{12}^{3}}{|e|^{-1}}\widetilde{h}_{ij}^{3}+a_{43}\widetilde{h}_{ij}^{4}
           +\cdots+a_{n3}\widetilde{h}_{ij}^{n},\\
\\
h_{ij}^{4}={h_{12}^{4}}{|e|^{-1}}\widetilde{h}_{ij}^{3}+a_{44}\widetilde{h}_{ij}^{4}
           +\cdots+a_{n4}\widetilde{h}_{ij}^{n},\\
\\
\cdots\cdots\cdots\cdots\cdots\cdots\cdots\cdots\cdots\cdots\cdots\cdots \\
\\
h_{ij}^{n}={h_{12}^{n}}{|e|^{-1}}\widetilde{h}_{ij}^{3}+a_{4n}\widetilde{h}_{ij}^{4}
           +\cdots+a_{nn}\widetilde{h}_{ij}^{n}.
\end{array}\right.\eqno{(\ast)}$$
Put $i=1$ and $j=2$ in $(\ast)$. Then $(\ast)$ has a unique solution
$$b:=\widetilde{h}_{12}^{3}=|e|\neq 0,\qquad
\widetilde{h}_{12}^{\beta}=0,\quad 4\leq\beta\leq n.$$ We denote
$\lambda^{\beta}=\tilde{h}_{11}^{\beta}$ for $\beta\geq 3$. So the
shape operators $\tilde{L}^{\alpha}$  have the following forms:
\begin{eqnarray}\label{d}
\tilde{L}^{3}=\begin{pmatrix} \lambda^{3} & b  \\b&
\nu^{3}\end{pmatrix}, \qquad
\tilde{L}^{\beta}=\begin{pmatrix}\lambda^{\beta} & 0\\0&
\nu^{\beta}\end{pmatrix},
\end{eqnarray}
where  $\beta=4,\cdots, n$.

When $M$ is a minimal surface immersed in $S^{n}$, then
\begin{eqnarray}\label{6}
\lambda^{\alpha}+\nu^{\alpha}=0,
\end{eqnarray}
for $\alpha=3,\cdots,n.$ For convenience, we still denote the new
frame filed by $\{e_{\alpha}\}$ and the corresponding second
fundamental form by $\{h_{ij}^{\alpha}\}$. We denote
$$\overline{S}:=\sum_{(i,j,\beta>3)}(h_{ij}^{\beta})^{2}
             =2\sum_{(\beta>3)} (\lambda^{\beta})^{2},\quad
S_{3}:=\sum_{(i,j)}(h_{ij}^{3})^{2}=2(\lambda^{3})^{2}+2b^{2}.$$
Then
$$S=\overline{S}+S_{3}=2\sum_{(\beta>3)}(\lambda^{\beta})^{2}+2(\lambda^{3})^{2}+2b^{2}.$$
According to (\ref{6}), we define
\begin{eqnarray}
\lambda_{i}^{\alpha}:=h_{11i}^{\alpha}=-h_{22i}^{\alpha},\qquad
\lambda_{ij}^{\alpha}:=h_{11ij}^{\alpha}=-h_{22ij}^{\alpha}.
\end{eqnarray}
 By the symmetry of
$h_{ijk}^{\alpha}$ and $h_{ijkl}^{\alpha}$ with respect to indices
$i,j,k$, we denote
\begin{eqnarray}
P&:=&\sum(h_{ijk}^{\alpha})^{2}
 =4\sum_{\alpha=3}^{n}\left((\lambda^{\alpha}_{1})^{2}+(\lambda^{\alpha}_{2})^{2}\right),\nonumber\\
Q&:=&\sum(h_{ijkl}^{\alpha})^{2}
 =4\sum_{\alpha=3}^{n}\left((\lambda^{\alpha}_{11})^{2}+(\lambda^{\alpha}_{22})^{2}
   +(\lambda^{\alpha}_{12})^{2}+(\lambda^{\alpha}_{21})^{2}\right).\label{38}
\end{eqnarray}
It follows from \eqref{eq2} and \eqref{d} that the Riemannian
curvature tensor, the normal curvature tensor and the first
covariant differentials of the normal curvature tensor become
\begin{eqnarray}\label{e}
R_{ijkl}=\left(1-S/2\right)(\delta_{ik}\delta_{jl}-\delta_{il}\delta_{jk}),\quad
 R_{3\beta 12}=-2b\lambda^{\beta},\quad
R_{\gamma\beta12}=0,
\end{eqnarray}
\begin{eqnarray}\label{f}
R_{3\beta12,k}=2(\lambda^{3}h^{\beta}_{12k}-\lambda^{\beta}h^{3}_{12k}-b\lambda^{\beta}_{k}),
\;
R_{\beta\gamma12,k}=2(\lambda^{\beta}h_{12k}^{\gamma}-\lambda^{\gamma}h_{12k}^{\beta}),
\end{eqnarray}
where $\beta,\gamma=4,\cdots,n$. It is not difficult to check that
\begin{eqnarray*}
S_{k}=2\sum h_{ij}^{\alpha}h_{ijk}^{\alpha}
=4\sum_{\beta=4}^{n}\lambda^{\beta}\lambda^{\beta}_{k}+4(\lambda^{3}\lambda^{3}_{k}
+bh^{3}_{12k}).
\end{eqnarray*}
Hence
\begin{eqnarray}
\frac{1}{4}S_{1}=\sum_{\beta=4}^{n}\lambda^{\beta}\lambda^{\beta}_{1}
                +\lambda^{3}\lambda^{3}_{1}+b\lambda^{3}_{2},\qquad
\frac{1}{4}S_{2}=\sum_{\beta=4}^{n}\lambda^{\beta}\lambda^{\beta}_{2}
                +\lambda^{3}\lambda^{3}_{2}-b\lambda^{3}_{1}.\label{h}
\end{eqnarray}
From Ricci's formula (\ref{a}), we have
\begin{eqnarray}\label{35}
\lambda^{3}_{12}-\lambda^{3}_{21}=-(2-S-\overline{S})b,\qquad
\lambda^{3}_{11}+\lambda^{3}_{22}=(2-S)\lambda^{3},\nonumber\\
\\
\lambda^{\beta}_{11}+\lambda^{\beta}_{22}=(2-S-2b^2)\lambda^{\beta},\qquad
\lambda^{\beta}_{12}-\lambda^{\beta}_{21}=-2b\lambda^{3}\lambda^{\beta},\nonumber
\end{eqnarray}
for $\beta=4,\cdots,n$.

Using the above formulae, we can obtain the following proposition
for later use.
\begin{proposition}\label{prop2}
Let $M$ be a surface minimally immersed in a unit sphere $S^{n}$.
Then
\begin{eqnarray}\label{45}
\frac{1}{2}\Delta S=P+(2-S)S-4b^{2}\overline{S}.
\end{eqnarray}
\end{proposition}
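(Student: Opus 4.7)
The plan is to start from the identity $\tfrac12\Delta S = P + \sum h_{ij}^{\alpha}\Delta h_{ij}^{\alpha}$, which comes directly from differentiating $S=\sum(h_{ij}^{\alpha})^2$ twice and using the definition of $P$ in \eqref{38}. The Simons-type formula \eqref{aa} then rewrites the Laplacian $\Delta h_{ij}^{\alpha}$ in terms of iterated covariant derivatives of the mean curvature and two curvature-type contractions. Since $M$ is minimal, the $h_{mmij}^{\alpha}$ term disappears immediately, so everything reduces to computing the three contraction terms
\[
\sum h_{ij}^{\alpha}h_{pi}^{\alpha}R_{pmjm},\qquad \sum h_{ij}^{\alpha}h_{mp}^{\alpha}R_{pijm},\qquad \sum h_{ij}^{\alpha}h_{mi}^{\delta}R_{\delta\alpha jm}.
\]

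For the first two terms I would plug in the intrinsic curvature formula from \eqref{e}, namely $R_{ijkl}=(1-S/2)(\delta_{ik}\delta_{jl}-\delta_{il}\delta_{jk})$. Contracting twice on $m$ in the first term yields the Ricci tensor $(1-S/2)\delta_{pj}$ (remembering $\delta_{mm}=2$), so the term becomes $(1-S/2)S$. For the second, expanding the two Kronecker deltas gives $(1-S/2)(S - (h_{ii}^{\alpha})^2)$, and minimality kills the trace, again producing $(1-S/2)S$. Adding the two yields $(2-S)S$, the expected intrinsic part of the formula.

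The main obstacle is the normal-curvature contraction $\sum h_{ij}^{\alpha}h_{mi}^{\delta}R_{\delta\alpha jm}$: this is where the special frame \eqref{d} does its work. Here $R_{\delta\alpha jm}$ is antisymmetric in the pair $(\delta,\alpha)$ and in the pair $(j,m)$, so by \eqref{e} only the entries with $\{\delta,\alpha\}=\{3,\beta\}$ for $\beta\geq 4$ and $\{j,m\}=\{1,2\}$ survive, and each contributes a factor $\pm 2b\lambda^{\beta}$. I would split the sum into the two cases $\alpha=3,\delta=\beta$ and $\alpha=\beta,\delta=3$, and for each compute the tangential sums like $\sum_i h_{i1}^{3}h_{i2}^{\beta}$ using the explicit matrices \eqref{d}; the off-diagonal entry $h_{12}^{\beta}=0$ (for $\beta\geq 4$) together with $h_{12}^{3}=b$ collapses these to simple multiples of $b\lambda^{\beta}$.

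Assembling the four sub-cases, each of them contributes $-2b^{2}(\lambda^{\beta})^{2}$, so after summation over $\beta\geq 4$ the whole normal-curvature contraction equals $-8b^{2}\sum_{\beta\geq 4}(\lambda^{\beta})^{2}=-4b^{2}\overline{S}$, using $\overline{S}=2\sum_{\beta\geq 4}(\lambda^{\beta})^{2}$. Combining everything gives $\tfrac12\Delta S = P + (2-S)S - 4b^{2}\overline{S}$, as required. The only real delicacy is careful bookkeeping of signs from the antisymmetries of $R_{\delta\alpha jm}$ in the last step; the rest is a direct substitution.
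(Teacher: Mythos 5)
Your proposal is correct and follows essentially the same route as the paper: both start from $\tfrac12\Delta S = P + \sum h_{ij}^{\alpha}\Delta h_{ij}^{\alpha}$, insert the Simons identity \eqref{aa}, evaluate the two intrinsic contractions via \eqref{e} to obtain $(2-S)S$, and reduce the normal-curvature contraction to $\sum_{\beta\geq 4}4b\lambda^{\beta}R_{3\beta12}=-4b^{2}\overline{S}$ using the frame \eqref{d}. Your sign bookkeeping in the final step agrees with the paper's computation.
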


\begin{proof}
From (\ref{aa}) and (\ref{e}), we have
\begin{eqnarray*}
\sum_{i,j,\alpha}h_{ij}^{\alpha}\Delta h_{ij}^{\alpha}
&=&\sum_{i,j,p,m,\alpha}(h_{ij}^{\alpha}h_{pi}^{\alpha}R_{pmjm}
 +h_{ij}^{\alpha} h_{mp}^{\alpha}R_{pijm})
 +\sum_{i,j,m,\alpha,\delta}h_{ij}^{\alpha} h_{mi}^{\delta}R_{\delta\alpha
 jm}\nonumber\\
 &=&(2-S)(h_{ij}^{\alpha})^2+\sum_{\beta=4}^{n}4b\lambda^{\beta}R_{3\beta
12} =(2-S)S-4b^2\overline{S}.
\end{eqnarray*}
Hence
\begin{eqnarray*}
\frac{1}{2}\Delta
S=\sum_{\alpha,i,j,k}(h_{ijk}^{\alpha})^2+\sum_{\alpha,i,j}h_{ij}^{\alpha}\Delta
h_{ij}^{\alpha}=P+(2-S)S-4b^{2}\overline{S}.
\end{eqnarray*}
\end{proof}
\begin{remark}\label{rem1}
 If the normal bundle of the surface $M$ minimally immersed in $S^{n}$ is flat, we choose $e_{1}, e_{2}$
 such that $b$ is zero. We easily get
$$\frac{1}{2}\Delta S=P+(2-S)S.$$
\end{remark}
Next we consider the case that the normal bundle of $M$ is nowhere
flat. In this case, $b\neq 0$ and we can establish the following
Theorem \ref{thm1}. Some partial result was obtained in \cite{hou}.
Here, we will give the detailed proof of the theorem for the
completeness.

\begin{theorem}\label{thm1}
 Let $M$
be a surface minimally immersed in a unit sphere $S^{n}$ with
nowhere flat normal bundle.  If the Gauss curvature of $M$ is
positive, we can establish a local orthonormal frame filed
$\{e_{3},\cdots,e_{n}\}$ normal to $M$ such that the shape operators
$L^{\alpha}$ with respect to $e_{\alpha}$ have the following forms:
\begin{eqnarray*}
L^{3}= \begin{pmatrix} 0 & b  \\ b& 0\end{pmatrix}, \qquad L^{4}=
\begin{pmatrix} b & 0\\0&
-b\end{pmatrix}, \qquad L^{\beta}=
\begin{pmatrix} 0 & 0\\0&
0\end{pmatrix},
\end{eqnarray*}
where $\beta=5,\cdots,n$. Furthermore
\begin{eqnarray}\label{7}
 b^2=S/{4},\qquad
\lambda^{3}_{1}=-\lambda^{4}_{2}=-\frac{1}{4\sqrt{S}}S_{2},\qquad\lambda^{3}_{2}=\lambda^{4}_{1}=\frac{1}{4\sqrt{S}}S_{1},
\end{eqnarray}
 \begin{eqnarray}\label{12}
&&\lambda^{3}_{11}=-\lambda^{4}_{21}=-\frac{1}{4\sqrt{S}}S_{21},\quad\quad\quad\lambda^{3}_{12}=-\lambda^{4}_{22}=-\frac{1}{4\sqrt{S}}(S_{22}-P),\nonumber
\\\\
&&\lambda^{3}_{22}=\lambda^{4}_{12}=\frac{1}{4\sqrt{S}}S_{12},\qquad\qquad\lambda^{3}_{21}=\lambda^{4}_{11}=\frac{1}{4\sqrt{S}}(S_{11}-P).\nonumber
\end{eqnarray}

\end{theorem}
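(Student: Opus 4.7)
The plan is to refine the frame (2.8) in two further stages and then read off the derivative formulas using Codazzi.

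\emph{Normal-frame reduction.} A rotation within $\{\tilde e_4,\dots,\tilde e_n\}$ preserves the form (2.8); I will choose it so that $\tilde\lambda^\beta=0$ for $\beta\ge 5$, writing $\mu:=\tilde\lambda^4$, so $\mu^2=\overline S/2$. After this step, $\tilde L^3$ carries both off-diagonal $b$ and diagonal $\pm\lambda^3$, while $\tilde L^4$ has only diagonal $\pm\mu$, and $\tilde L^\beta=0$ for $\beta\ge 5$. Since the normal bundle is nowhere flat, $\mu>0$.

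\emph{Canonical form.} The theorem reduces to the two algebraic identities $\lambda^3=0$ and $\mu=b$; equivalently, the two nonzero singular values of the shape map are equal, i.e., the Wintgen-ideal identity $K+K^N=1$ holds. To establish this from the hypothesis $K>0$, I will combine the Ricci identities (2.17)---which read $\lambda^3_{11}+\lambda^3_{22}=2K\lambda^3$ and a parallel equation for $\lambda^\beta$, giving Bochner-type second-order conditions on $\lambda^3$ and $\mu-b$---with the Codazzi equations and the derivative formulas (2.14) for $R_{3\beta 12,k}$ and $R_{\beta\gamma 12,k}$. Positivity of $K$ then supplies the sign needed to force $\lambda^3\equiv 0$ and $\mu\equiv b$, after which a further rotation in the $\{\tilde e_3,\tilde e_4\}$-plane combined with a tangent rotation produces the form asserted in the theorem with common value $b$. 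I expect this to be the main technical step of the proof.

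\emph{Derivative formulas.} In the canonical frame, $h^4_{11}=h^3_{12}=b$ and the remaining relevant tensor entries vanish, so the connection corrections in the definition of covariant derivative collapse, giving $h^4_{11,k}=h^3_{12,k}$. Full Codazzi symmetry of $h^3_{ij,k}$ in $i,j,k$ then yields $\lambda^4_1=h^3_{12,1}=h^3_{11,2}=\lambda^3_2$ and $\lambda^4_2=h^3_{12,2}=h^3_{22,1}=-\lambda^3_1$. Substituting into (2.16), which in the new frame simplifies to $\tfrac14 S_1=2b\lambda^3_2$ and $\tfrac14 S_2=-2b\lambda^3_1$, together with $\sqrt S=2b$, gives (2.19). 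Applying the same Codazzi argument one order higher to $h^3_{11,k,l}$ produces the second-order pairings $\lambda^4_{kj}=\pm\lambda^3_{k'j}$; differentiating (2.19) and controlling the symmetric and antisymmetric parts via (2.17) and Proposition \ref{prop2} will yield (2.20).
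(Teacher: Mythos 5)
Your reduction of the theorem to the two identities $\lambda^{3}\equiv 0$ and $b^{2}=S/4$ is the right way to organize the proof, and your treatment of the first-order derivative formulas is sound and in fact slightly cleaner than the paper's: in the canonical frame one has $h^{4}_{11k}=h^{3}_{12k}=\partial_{k}(\sqrt{S}/2)$ because every connection correction multiplies a vanishing entry, and full Codazzi symmetry of $h^{3}_{ijk}$ together with (\ref{h}) then yields (\ref{7}). For the second-order formulas (\ref{12}), however, this collapse fails: $h^{\gamma}_{11k}$ for $\gamma\ge 5$ is no longer zero and re-enters through $\omega_{3\gamma},\omega_{4\gamma}$, so you additionally need Chern's relations (\ref{13}); ``the same Codazzi argument one order higher'' does not suffice by itself.

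The genuine gap is the step you explicitly defer, which is the entire content of the theorem: forcing $\lambda^{3}\equiv 0$ and $\mu\equiv b$. Your plan --- trace Ricci identities (\ref{35}) plus Codazzi plus ``positivity of $K$ supplies the sign'' --- cannot work as stated, for two reasons. First, the conclusion is not a pointwise consequence of the structure equations: non-superminimal minimal discs in $S^{4}$ with $K>0$ and nonflat normal bundle exist, so any proof must be global, yet your outline never invokes compactness of $M$. Second, the identities (\ref{35}) concern components of $\nabla^{2}h$, not Laplacians of scalar functions; traced against $h$ and integrated they give exactly Proposition \ref{prop2}, $\tfrac12\Delta S=P+(2-S)S-4b^{2}\overline{S}$, in which the term $-4b^{2}\overline{S}$ carries the \emph{unfavorable} sign, so positivity of $K$ yields nothing from this level of the Bochner hierarchy. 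The paper must go one order higher: it integrates the divergence (\ref{l}) of $h^{\alpha}_{ijk}\Delta h^{\alpha}_{ij}$ over the closed surface to obtain the integral inequality (\ref{u}), and plays it against the pointwise inequality $b^{2}\overline{S}\le\tfrac18 S^{2}$ of (\ref{v}) multiplied by $2-S>0$; equality in both then forces $\lambda^{3}=0$ and $\overline{S}=2b^{2}=S/2$ everywhere. None of your named ingredients produces either half of this. If you want an argument genuinely in the spirit of ``Codazzi plus $K>0$'', the efficient route is the holomorphic quartic form $\sum_{\alpha}(h^{\alpha}_{11}-ih^{\alpha}_{12})^{2}\,dz^{4}$ of a minimal surface in $S^{n}$: $K>0$ on a closed surface forces genus $0$ by Gauss--Bonnet, hence the form vanishes, and its real and imaginary parts are $\tfrac12 S-2b^{2}$ and $-2b\lambda^{3}$, which is precisely the desired assertion --- but that too is a global argument, and it is not the one you sketched.
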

\begin{proof} We
take the orthonormal frame field
$\{e_{1},e_{2},e_{3},\cdots,e_{n}\}$ on $M$ such that the shape
operators have the form
\begin{eqnarray}\label{DDDDD}
L^{3}=\begin{pmatrix} \lambda^{3} & b
\\b& -\lambda^{3}\end{pmatrix}; \qquad
L^{\beta}=\begin{pmatrix}\lambda^{\beta} & 0\\0&
-\lambda^{\beta}\end{pmatrix},
\end{eqnarray}
where $ \beta=4,\cdots,n$. It is easy to check from (\ref{aa}) that
\begin{eqnarray}\label{l}
\sum_{i,j,k,\alpha}(h_{ijk}^{\alpha}\Delta h_{ij}^{\alpha})_{k} &=&
\sum_{i,j,\alpha}(\Delta h_{ij}^{\alpha})^2
+\sum_{i,j,k,l,p,\alpha}(h_{ijk}^{\alpha}h_{pik}^{\alpha}R_{pljl}
+h_{ijk}^{\alpha}h_{lpk}^{\alpha}R_{pijl})
\nonumber\\
&+&\sum_{i,j,k,l,p,\alpha}(h_{ijk}^{\alpha}h_{pi}^{\alpha}R_{pljl,k}
+h_{ijk}^{\alpha}h_{lp}^{\alpha}R_{pijl,k})
\nonumber\\
&+&\sum_{i,j,k,l,\alpha,\delta}h_{ijk}^{\alpha}h_{lik}^{\delta}R_{\delta\alpha
jl}
+\sum_{i,j,k,l,\alpha,\delta}h_{ijk}^{\alpha}h_{li}^{\delta}R_{\delta\alpha
jl,k}.
 \end{eqnarray}
Firstly, by (\ref{aa}) and (\ref{e}), we get
\begin{eqnarray*}
&&\Delta h_{11}^{\beta}=(2-S-2b^2)\lambda^{\beta}, \qquad  \Delta
h_{12}^{\beta}=2b\lambda^{3}\lambda^{\beta}, \\
&&\Delta h_{12}^{3}=(2-S-\overline{S})b, \quad\quad\quad \Delta
h_{11}^{3}=(2-S)\lambda^{3},
\end{eqnarray*}
for $\beta=4,\cdots n$, so
\begin{eqnarray}\label{m}
\sum_{i,j,\alpha}(\Delta h_{ij}^{\alpha})^2
&=&2\sum_{\beta=4}^{n}(\Delta h_{11}^{\beta})^2+2(\Delta
h_{11}^{3})^2+2\sum_{\beta=4}^{n}(\Delta h_{12}^{\beta})^2+(\Delta
h_{12}^{3})^2\nonumber\\
&=&(2-S)^2S+2(5S-8)b^2\overline{S}.
\end{eqnarray}\par\noindent
Secondly, using (\ref{e}) and (\ref{h}), we get
\begin{eqnarray}\label{n}
\sum_{i,j,k,l,p,\alpha}
\Big(h_{ijk}^{\alpha}h_{pik}^{\alpha}R_{pljl}
&+&h_{ijk}^{\alpha}h_{lpk}^{\alpha}R_{pijl}\Big)
+\sum_{i,j,k,l,\alpha,\delta} h_{ijk}^{\alpha}h_{lik}^{\delta}R_{\delta\alpha jl}\nonumber\\
=(2-S)P &+&\sum_{\gamma=4}^{n}8(\lambda^{3}_{1}\lambda^{\gamma}_{2}
-\lambda^{3}_{2}\lambda^{\gamma}_{1})R_{\gamma312}\nonumber\\
=(2-S)P
&+&16b\sum_{\gamma=4}^{n}(\lambda^{3}_{1}\lambda^{\gamma}\lambda^{\gamma}_{2}
-\lambda^{3}_{2}\lambda^{\gamma}\lambda^{\gamma}_{1}).\nonumber\\
=(2-S)P
&+&4b^2\sum_{i,j,k}(h_{ijk}^{3})^2+4b(\lambda^{3}_{1}S_{2}-\lambda^{3}_{2}S_{1}).
\end{eqnarray}
Thirdly, by the first formula of (\ref{e}), we have
\begin{eqnarray}\label{qqq}
\sum_{i,j,k,l,p,\alpha}(h_{ijk}^{\alpha}h_{pi}^{\alpha}R_{pljl,k}
+h_{ijk}^{\alpha}h_{lp}^{\alpha}R_{pijl,k})=-\sum_{i,j,k,\alpha}h_{ij}^{\alpha}h_{ijk}^{\alpha}S_{k}=-\frac{1}{2}|\nabla
S|^2.
\end{eqnarray}
At last, using (\ref{f}), we have
\begin{eqnarray}\label{s}
\sum_{i,j,k,l,\alpha,\delta}
h_{ijk}^{\alpha}h_{li}^{\delta}R_{\delta\alpha jl,k}
&=&\sum_{\gamma=4}^{n}2(\lambda^{\gamma}\lambda^{3}_{2}+b\lambda^{\gamma}_{1}
-\lambda^{3}\lambda^{\gamma}_{2})R_{3\gamma12,1}\nonumber\\
&+&\sum_{\gamma=4}^{n}2(-\lambda^{\gamma}\lambda^{3}_{1}
+b\lambda^{\gamma}_{2}+\lambda^{3}\lambda^{\gamma}_{1})R_{3\gamma12,2}\nonumber\\
&+&\sum_{\beta,\gamma=4}^{n}(-2\lambda^{\gamma}\lambda^{\beta}_{2}R_{\gamma\beta12,1}
+2\lambda^{\gamma}\lambda^{\beta}_{1}R_{\gamma\beta12,2}).\nonumber\\
&=&-\frac{1}{2}SP+4b^2\sum_{i,j,k}(h_{ijk}^{3})^2
+4b(\lambda^{3}_{1}S_{2}-\lambda^{3}_{2}S_{1})+\frac{1}{4}|\nabla
S|^2.\nonumber\\
\end{eqnarray}
Substituting (\ref{m}), (\ref{n}),(\ref{qqq}) and (\ref{s}) into
(\ref{l}), we have
\begin{eqnarray}\label{ttt} \sum_{i,j,k,\alpha}
(h_{ijk}^{\alpha}\Delta h_{ij}^{\alpha})_{k}
&=&(2-\frac{3}{2}S)P+(2-S)^2S+2(5S-8)b^2\overline{S}\nonumber\\
&+&8b^{2}\sum_{i,j,k}(h_{ijk}^{3})^2+8b(\lambda^{3}_{1}S_{2}-\lambda^{3}_{2}S_{1})-\frac{1}{4}|\nabla
S|^2,
\end{eqnarray}
which together with $P=\frac{1}{2}\Delta S-(2-S)S+4b^2\overline{S}$
and  $S\Delta S=\frac{1}{2}\Delta S^2-|\nabla S|^2$ forces that
\begin{eqnarray}\label{tt}
\sum_{i,j,k,\alpha} (h_{ijk}^{\alpha}\Delta h_{ij}^{\alpha})_{k}
&=&\frac{1}{2}(2-S)S^2+4(S-2)b^2\overline{S}+\Delta
S-\frac{3}{8}\Delta S^2\nonumber\\
&&+8b^{2}\sum_{i,j,k}(h_{ijk}^{3})^2+8b(\lambda^{3}_{1}S_{2}-\lambda^{3}_{2}S_{1})+\frac{1}{2}|\nabla
S|^2.
\end{eqnarray}
Taking integration over $M$ on both sides of (\ref{tt}), we have
\begin{eqnarray*}
\int_{M}\left\{\frac{1}{2}S^2(2-S)+4(S-2)b^2\overline{S}
  +2(4b\lambda^{3}_{1}+\frac{1}{2}S_{2})^2+2(4b\lambda^{3}_{2}-\frac{1}{2}S_{1})^2\right\}=0,
\end{eqnarray*}
that is
\begin{eqnarray}\label{u}
\int_{M}(2-S)b^2\overline{S}&=&\int_{M}\left\{\frac{1}{8}S^2(2-S)+\frac{1}{2}(4b\lambda^{3}_{1}+\frac{1}{2}S_{2})^2+\frac{1}{2}(4b\lambda^{3}_{2}-\frac{1}{2}S_{1})^2\right\}\nonumber\\
&\geq&\int_{M}\frac{1}{8}S^2(2-S),
\end{eqnarray}
and the equality holds if and only if
$$b\lambda^{3}_{1}=-\frac{1}{8}S_{2},\qquad b\lambda^{3}_{2}=\frac{1}{8}S_{1}.$$
On the other hand, it is easy to check
\begin{eqnarray}\label{v}
b^2\overline{S}\leq\frac{1}{2}S_{3}\overline{S}\leq\frac{1}{8}(S_{3}
+\overline{S})^2=\frac{1}{8}S^2,
\end{eqnarray}
and the equality holds if and only if
\begin{eqnarray*}\lambda^{3}=0,\quad \overline{S}=S_{3}=2b^2.\end{eqnarray*}
Since the Gauss curvature of $M$ is positive, we have $S<2$. Taking
integration over $M$ on both sides of (\ref{v}), we obtain
\begin{eqnarray}\label{w}
\int_{M}(2-S)b^2\overline{S}\leq\int_{M}\frac{1}{8}S^2(2-S).
\end{eqnarray}
It follows from (\ref{u}) and (\ref{w}) that
$$\int_{M}(2-S)b^2\overline{S}=\int_{M}\frac{1}{8}S^2(2-S),$$
which implies that the equalities in (\ref{u}) and (\ref{v}) hold
always. Therefore
\begin{eqnarray*}\label{x}
\lambda^{3}=0,\quad b\lambda^{3}_{1}=-\frac{1}{8}S_{2},\quad
b\lambda^{3}_{2}=\frac{1}{8}S_{1},\quad\overline{S}=2b^2.
\end{eqnarray*}
 This
together with the fact that $S=\overline{S}+2b^2$ yields
$$b^2=\frac{1}{4}S,\qquad \overline{S}=\frac{1}{2}S,\qquad\lambda^{3}_{1}=\frac{1}{4\sqrt{S}}S_{2},\qquad\lambda^{3}_{2}=\frac{1}{4\sqrt{S}}S_{1}.$$
Therefore we deduce that there must exist a number $\beta$ such that
$\lambda^{\beta}\neq 0$ for $\beta\geq4$. We choose a unit normal
vector field $\overline{e}_{4}=e/|e|$ where
$e=\sum_{\gamma=4}^{n}h_{11}^{\gamma}e_{\gamma}$, and take an
orthogonal transformation in the normal space $N_{x}(M)$:
$\overline{e}_{3}=e_{3}$ and

$$
\begin{pmatrix}
\overline{e}_{4}\\
\\
\overline{e}_{5}\\
\\
\vdots \\
\\
\overline{e}_{n}
\end{pmatrix}
=\begin{pmatrix}
{h^{4}_{11}}{|e|^{-1}} & {h^{4}_{11}}{|e|^{-1}} & \cdots & {h^{n}_{11}}{|e|^{-1}}\\
\\
b_{54} & b_{55} & \cdots & b_{5n}  \\
\\
\vdots & \vdots &        & \vdots \\
\\
b_{n4} & b_{n5} & \cdots & b_{nn}
\end{pmatrix}
\begin{pmatrix}
e_{4} \\
\\
e_{5}\\
\\
\vdots \\
\\
e_{n}
\end{pmatrix}.
$$
Let $\overline{L}^{\alpha}=(\overline{h}^\alpha_{ij})$ be the shape
operators with respect to $\overline{e}_{\alpha}$, $3\le \alpha\le
n$. It follows that
$$\left\{
\begin{array}{l}
h_{ij}^{3}=\overline{h}_{ij}^{3},\\
h_{ij}^{4}={\overline{h}_{11}^{4}}{|e|^{-1}}\overline{h}_{ij}^{4}+b_{54}\overline{h}_{ij}^{5}
           +\cdots+b_{n4}\overline{h}_{ij}^{n},\\
\\
h_{ij}^{5}={\overline{h}_{11}^{5}}{|e|^{-1}}\overline{h}_{ij}^{4}+b_{55}\overline{h}_{ij}^{5}
           +\cdots+b_{n5}\overline{h}_{ij}^{n},\\
\\
\cdots\cdots\cdots\cdots\cdots\cdots\cdots\cdots\cdots\cdots\cdots\cdots \\
\\
h_{ij}^{n}={\overline{h}_{11}^{n}}{|e|^{-1}}\overline{h}_{ij}^{4}+b_{5n}\overline{h}_{ij}^{5}
           +\cdots+b_{nn}\overline{h}_{ij}^{n}.
\end{array}\right.\eqno{(\star)}$$
Put $i=1$ and $j=1$ in $(\star)$. Then it is easy to check that
$(\star)$ has unique solution
$$\overline{h}_{11}^{4}=|e|^{-1}> 0,\qquad
\overline{h}_{11}^{\gamma}=0,\qquad 5\leq\gamma\leq n.$$ Put  $i=1$
and $j=2$ in $(\star)$. Then it is easy to check that $(\star)$ has
unique solution $\overline{h}_{12}^{\gamma}=0$, $5\leq\gamma\leq n.$
Therefore
 the shape operators with respect to   $\{e_1, e_2; \overline{e}_{\gamma}\}^n_{\gamma=3}$
 have the following forms:
\begin{eqnarray*}
\overline{L}^{3}= \begin{pmatrix} 0 & b  \\ b& 0\end{pmatrix},
\qquad \overline{L}^{4}=
\begin{pmatrix} \overline{\lambda}^{4} & 0\\0&
-\overline{\lambda}^{4}\end{pmatrix}, \qquad \overline{L}^{\beta}=
\begin{pmatrix} 0 & 0\\0&
0\end{pmatrix},\qquad 5\leq\beta\leq n,
\end{eqnarray*}
where $b^2=(\overline{\lambda}^{4})^2=S/{4}$, that is
$\overline{\lambda}^{4}=b=\sqrt{S}/2.$ For convenience, we denote
the new frame field by $\{e_1, e_2; e_{\gamma}\}^n_{\gamma=3}$. So
far, we have built a frame field on $M$ such that the shape
operators have the following forms:
\begin{eqnarray*}
L^{3}= \begin{pmatrix} 0 & b  \\ b& 0\end{pmatrix}, \qquad L^{4}=
\begin{pmatrix} b & 0\\0&
-b\end{pmatrix}, \qquad L^{\beta}=
\begin{pmatrix} 0 & 0\\0&
0\end{pmatrix},\qquad 5\leq\beta\leq n.
\end{eqnarray*}
Furthermore \begin{eqnarray}\label{14}
 b^2=S/{4},\qquad
\lambda^{3}_{1}=-\frac{1}{4\sqrt{S}}S_{2},\qquad\lambda^{3}_{2}=\frac{1}{4\sqrt{S}}S_{1}.
\end{eqnarray}
It follows from Chern \cite{chern} and the choice of the normal
vector field $e_{3},e_{4}$ that
\begin{eqnarray}\label{13}
 \sum_{\gamma=5}^{n}\lambda^{\gamma}_{1}\lambda^{\gamma}_{2}=0,\qquad
\sum_{\gamma=5}^{n}\left((\lambda^{\gamma}_{1})^2-(\lambda^{\gamma}_{2})^2\right)=0.
\end{eqnarray}
Next we take covariant differential of $h_{11}^{4}$ and have
$$h_{11k}^{4}\omega_{k}=dh_{11}^{4}+2h_{12}^{4}\omega_{21}+\sum_{\alpha=3}^{n}h_{11}^{\alpha}\omega_{\alpha4}=dh_{11}^{4}=\frac{1}{4\sqrt{S}}S_{k}\omega_{k},$$
which implies
 \begin{eqnarray}\label{15}
 \lambda^{4}_{1}= \lambda^{3}_{2}=\frac{1}{4\sqrt{S}}S_{1},\qquad\lambda^{4}_{2}=-\lambda^{3}_{1}=\frac{1}{4\sqrt{S}}S_{2}.
 \end{eqnarray}
We take covariant differential of $h_{11}^{\gamma}$ and
$h_{12}^{\gamma}$  for $5\leq \gamma\leq n$,
\begin{eqnarray*}
&&h_{11k}^{\gamma}\omega_{k}=dh_{11}^{\gamma}+2h_{12}^{\gamma}\omega_{21}+\sum_{\alpha=3}^{n}h_{11}^{\alpha}\omega_{\alpha\gamma}=b\omega_{4\gamma},\\
&&h_{12k}^{\gamma}\omega_{k}=dh_{12}^{\gamma}+h_{22}^{\gamma}\omega_{21}+h_{11}^{\gamma}\omega_{12}+\sum_{\alpha=3}^{n}h_{12}^{\alpha}\omega_{\alpha\gamma}=b\omega_{3\gamma},
 \end{eqnarray*}
which imply
\begin{eqnarray}
\omega_{3\gamma}=\frac{1}{b}(\lambda^{\gamma}_{2}\omega_{1}-\lambda^{\gamma}_{1}\omega_{2}),\qquad
\omega_{4\gamma}=\frac{1}{b}(\lambda^{\gamma}_{1}\omega_{1}+\lambda^{\gamma}_{2}\omega_{2}).
\end{eqnarray}
We take covariant differential of $h_{111}^{3}$, $h_{112}^{3}$,
$h_{111}^{4}$ and $h_{112}^{4}$ respectively
\begin{eqnarray}
&&h_{111k}^{3}\omega_{k}=dh_{111}^{3}+3h_{211}^{3}\omega_{21}+h_{111}^{4}\omega_{43}+\sum_{\gamma=5}^{n}h_{111}^{\gamma}\omega_{\gamma3},\label{16}\\
&&h_{112k}^{3}\omega_{k}=dh_{112}^{3}+2h_{212}^{3}\omega_{21}+h_{111}^{3}\omega_{12}+h_{112}^{4}\omega_{43}+\sum_{\gamma=5}^{n}h_{112}^{\gamma}\omega_{\gamma3},\label{17}\\
&&h_{111k}^{4}\omega_{k}=dh_{111}^{4}+3h_{211}^{4}\omega_{21}+h_{111}^{3}\omega_{34}+\sum_{\gamma=5}^{n}h_{111}^{\gamma}\omega_{\gamma4},\label{18}\\
&&h_{112k}^{4}\omega_{k}=dh_{112}^{4}+2h_{212}^{4}\omega_{21}+h_{111}^{4}\omega_{12}+h_{112}^{3}\omega_{34}+\sum_{\gamma=5}^{n}h_{112}^{\gamma}\omega_{\gamma4}.\label{19}
 \end{eqnarray}
Then from (\ref{16}), (\ref{19}) and use (\ref{15}), (\ref{13}) we
have
\begin{eqnarray*}
(\lambda^{3}_{11}+\lambda^{4}_{21})\omega_{1}&+&(\lambda^{3}_{12}+\lambda^{4}_{22})\omega_{2}=\sum_{\gamma=5}^{n}(\lambda_{1}^{\gamma}\omega_{\gamma3}+\lambda_{2}^{\gamma}\omega_{\gamma4})\nonumber\\
&=&-\frac{2}{b}\sum_{\gamma=5}^{n}\lambda_{1}^{\gamma}\lambda_{2}^{\gamma}\omega_{1}+\frac{1}{b}\sum_{\gamma=5}^{n}\{(\lambda_{1}^{\gamma})^2-(\lambda_{2}^{\gamma})^2\}\omega_{2}=0.
 \end{eqnarray*}
It follows from (\ref{17}), (\ref{18}), (\ref{15}) and (\ref{13})
that
\begin{eqnarray*}
(\lambda^{3}_{21}-\lambda^{4}_{11})\omega_{1}&+&(\lambda^{3}_{22}-\lambda^{4}_{12})\omega_{2}=\sum_{\gamma=5}^{n}(\lambda_{2}^{\gamma}\omega_{\gamma3}-\lambda_{1}^{\gamma}\omega_{\gamma4})\nonumber\\
&=&\frac{2}{b}\sum_{\gamma=5}^{n}\lambda_{1}^{\gamma}\lambda_{2}^{\gamma}\omega_{2}+\frac{1}{b}\sum_{\gamma=5}^{n}\left((\lambda_{1}^{\gamma})^2-(\lambda_{2}^{\gamma})^2\right)\omega_{1}=0,
 \end{eqnarray*}
therefore
\begin{eqnarray}\label{20}
\lambda^{3}_{11}+\lambda^{4}_{21}=0,\quad
\lambda^{3}_{12}+\lambda^{4}_{22}=0,\quad
\lambda^{3}_{21}-\lambda^{4}_{11}=0,\quad
\lambda^{3}_{22}-\lambda^{4}_{12}=0.
\end{eqnarray}
On the other hand, we study the second covariant differentials of
$S$. It is not difficult to check that for all $k=1,2$,
\begin{eqnarray*}
S_{kl}&=&2\sum
(h_{ijl}^{\alpha}h_{ijk}^{\alpha}+h_{ij}^{\alpha}h_{ijkl}^{\alpha})=4\sum_{\alpha}(\lambda^{\alpha}_{k}\lambda^{\alpha}_{l}+h_{12k}^{\alpha}h_{12l}^{\alpha})+b\lambda^{3}_{kl}+bh_{12kl}^{4},
\end{eqnarray*}
which is equal to
\begin{eqnarray*}
&&\lambda^{4}_{11}+\lambda^{3}_{21}=\frac{1}{2\sqrt{S}}(S_{11}-P),\qquad
\lambda^{4}_{12}+\lambda^{3}_{22}=\frac{1}{2\sqrt{S}}S_{12}\nonumber\\
 &&\lambda^{4}_{22}-\lambda^{3}_{12}=\frac{1}{2\sqrt{S}}(S_{22}-P),\qquad
\lambda^{4}_{21}-\lambda^{3}_{11}=\frac{1}{2\sqrt{S}}S_{21}.
\end{eqnarray*}
This together with (\ref{20}) gives (\ref{12}). So we complete the
proof of Theorem \ref{thm1}.
\end{proof}

 From now on we use the orthonormal frame field
established by Theorem \ref{thm1}. We conclude this section with
some interesting and elementary formulas which will be useful in the
next section. Firstly, it follows from Theorem \ref{thm1} that
$b^{2}\bar{S}=S^{2}/8$. So we can rewrite Proposition \ref{prop2} as
follows.
\begin{proposition}\label{prop3}
Suppose that $M$ is a closed surface minimally immersed in a unit
sphere $S^{n}$ with positive Gauss curvature and nowhere flat normal
bundle. We have
\begin{eqnarray}\label{eq1}
\frac{1}{2}\Delta S=P-\frac{1}{2}S(3S-4).
\end{eqnarray}
\end{proposition}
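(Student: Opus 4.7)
The plan is to derive Proposition \ref{prop3} as a direct algebraic specialization of Proposition \ref{prop2} using the structural consequences of Theorem \ref{thm1}. Since Proposition \ref{prop2} already gives the general minimal-surface identity
\[
\tfrac{1}{2}\Delta S = P + (2-S)S - 4b^{2}\overline{S},
\]
the only remaining task is to rewrite the term $4b^{2}\overline{S}$ using the simplified form of the shape operators furnished by the adapted frame.

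First I would invoke Theorem \ref{thm1} to read off $b^{2} = S/4$ together with $L^{4} = \mathrm{diag}(b,-b)$ and $L^{\beta} = 0$ for $\beta \geq 5$. From the definition $\overline{S} = 2\sum_{\beta>3}(\lambda^{\beta})^{2}$, only the $\beta = 4$ term survives, so $\overline{S} = 2b^{2} = S/2$. Multiplying gives $b^{2}\overline{S} = S^{2}/8$, exactly the identity highlighted in the sentence preceding the proposition.

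Substituting this into the expression from Proposition \ref{prop2} converts the correction term $-4b^{2}\overline{S}$ into $-S^{2}/2$, and then combining with $(2-S)S = 2S - S^{2}$ collapses everything to $P + 2S - \tfrac{3}{2}S^{2} = P - \tfrac{1}{2}S(3S-4)$. There is no genuine obstacle here; the entire content of Proposition \ref{prop3} lies in the preceding Theorem \ref{thm1}, and the only care needed is to make sure one correctly identifies which $\lambda^{\beta}$ is nonvanishing in the adapted frame so that $\overline{S}$ reduces cleanly to $S/2$.
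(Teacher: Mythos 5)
Your proposal is correct and matches the paper's own (very brief) justification: the paper likewise obtains Proposition \ref{prop3} by noting that Theorem \ref{thm1} gives $b^{2}=S/4$ and $\overline{S}=2b^{2}=S/2$, hence $b^{2}\overline{S}=S^{2}/8$, and substituting into Proposition \ref{prop2}. The algebra $P+(2-S)S-\tfrac{1}{2}S^{2}=P-\tfrac{1}{2}S(3S-4)$ checks out, so there is nothing to add.
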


The Riemannian curvature tensor, the normal curvature tensor and the
first covariant differentials of the normal curvature tensor in
\eqref{e} and \eqref{f} can be simplified as
\begin{eqnarray}\label{0}
R_{ijkl}=\left(1-\frac{S}{2}\right)(\delta_{ik}\delta_{jl}-\delta_{il}\delta_{jk}),\quad
R_{ijkl,m}=-\frac{1}{2}S_{m}(\delta_{ik}\delta_{jl}-\delta_{il}\delta_{jk}),
\end{eqnarray}
\begin{eqnarray}\label{1}
R_{3412}=-\frac{1}{2}S,\quad
R_{3\beta12}=R_{4\beta12}=R_{\beta\gamma12}=0,
\end{eqnarray}
\begin{eqnarray}\label{2}
R_{3412,k}=-\frac{1}{2}S_{k},\quad
R_{3\beta12,1}=-2b\lambda^{\beta}_{1},\quad
R_{3\beta12,2}=-2b\lambda^{\beta}_{2},
\end{eqnarray}
\begin{eqnarray}
 R_{\beta\gamma12,k}=0,\quad
R_{4\beta12,2}=-2b\lambda^{\beta}_{1},\quad
R_{4\beta12,1}=2b\lambda^{\beta}_{2}
\end{eqnarray}
for $5\leq\beta,\gamma\leq n$. The Ricci's formula in \eqref{35}
becomes
\begin{eqnarray}\label{e3}
&&\lambda_{11}^{3}+\lambda_{22}^{3}=0,\qquad\lambda_{12}^{3}-\lambda_{21}^{3}=\dfrac
14\sqrt{S} (3S-4),\nonumber\\
&&\lambda_{11}^{\beta}+\lambda_{22}^{\beta}=0,\qquad
\lambda_{12}^{\beta}-\lambda_{21}^{\beta}=0
\end{eqnarray}
for $5\leq\beta\leq n$.
%%%%%%%%%%%%%%%%%%%%%%%%%%%%%%%%%%%%%%%%%%%%%%%%%%%%%%%%%%%%%%%%%%%%%%%%%%%%%%%
\section{\bf Main Results}
%%%%%%%%%%%%%%%%%%%%%%%%%%%%%%%%%%%%%%%%%%%%%%%%%%%%%%%%%%%%%%%%%%%%%%%%%%%%%%%
In \cite{calabi}, Calabi considered minimal immersions of compact
surfaces without boundary and with constant Gauss curvature $K$ into
$S^{n}$. He gave a complete list of all such immersions and proved
that the set of possible values of $K$ is discrete, namely
$K=K(s)=2/(s(s+1))$, $s\in \mathbb{N}$. This led to the Simon
conjecture as follows (see \cite{simon}).

{\bf{Simon conjecture} (intrinsic version)}: Let $M$ be a compact
surface minimally immersed into $S^{n}$. If $K(s+1)\leq K\leq K(s)$
for an $s\in \mathbb{N}$, then either $K=K(s+1)$ or $K=K(s)$ and the
immersion is one of the Calabi's standard minimal immersion.

There is another version of this conjecture for the extrinsic
curvature functions $S$. For minimal surfaces in $S^{n}$, both
curvature functions are related as follows:
$$2K=2-S,\qquad S=\frac{2(s-1)(s+2)}{s(s+1)}, \quad s\in \mathbb{N}.$$
Thus, for Calabi's standard immersions, we have

{\bf{Simon conjecture} (extrinsic version)}: Let $M$ be a compact
surface minimally immersed into $S^{n}$. If
$$\frac{2(s-1)(s+2)}{s(s+1)}\leq S\leq \frac{2s(s+3)}{(s+1)(s+2)},\quad s\in \mathbb{N}$$
then either $S=\frac{2(s-1)(s+2)}{s(s+1)}$ or
$S=\frac{2s(s+3)}{(s+1)(s+2)}$, and the immersion is one of the
Calabi's standard minimal immersion.

For a minimal immersion as considered above, $K=K(s)=1$ for $s=1$
gives $S=0$, and the immersion is an equator in $S^{3}(1)$.
 $K=K(s)=\frac{1}{3}$ for $s=2$ gives $S=\frac{4}{3}$ and the
immersion is a Veronese surface in $S^{4}(1)$.  $K=K(s)=\frac{1}{6}$
for $s=3$ gives $S=\frac{5}{3}$ and the immersion is a generalized
Veronese surface in $S^{6}(1)$.

So far, Simon conjecture has been solved in the case $s=1$ and
$s=2$, see \cite{L.H.B,B.K,K.M}. Using the frame field established
by Theorem \ref{thm1}, we give a very simple proof of Simon
conjecture for the minimal surface in $S^{n}$ with flat or nowhere
flat normal bundle, which is critical for later use.
\begin{theorem}\label{thm2}
 Let $M$ be a closed minimal surface
immersed in $S^{n}$ with flat or nowhere flat normal bundle. If
$0\leq S\leq {4}/{3}$, then $S=0$ or $S={4}/{3}$.
\end{theorem}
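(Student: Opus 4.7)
The plan is to split into the two regimes of the hypothesis and, in each, apply the fact that for a closed $M$ the integral $\int_{M}\Delta S$ vanishes, combined with the sign structure of the Bochner-type identity already derived for $\frac{1}{2}\Delta S$. In both regimes the right-hand side will turn out to be a sum of two quantities that are pointwise non-negative on $0\leq S\leq 4/3$, and integration will force each summand to vanish identically.

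In the flat normal bundle case, Remark \ref{rem1} gives $\frac{1}{2}\Delta S = P + (2-S)S$. On the interval $0\leq S\leq 4/3<2$ we have $P\geq 0$ by definition and $(2-S)S\geq 0$, so integration over the closed surface forces both summands to vanish identically. Hence $(2-S)S\equiv 0$ with $S<2$, yielding $S\equiv 0$.

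In the nowhere flat case, Proposition \ref{prop3} gives $\frac{1}{2}\Delta S = P - \frac{1}{2}S(3S-4)$. The quadratic $-\frac{1}{2}S(3S-4)$ is non-negative precisely on $[0,4/3]$, i.e.\ on the hypothesis range, so again both summands of the right-hand side are $\geq 0$. Integration therefore forces $P\equiv 0$ and $S(3S-4)\equiv 0$. The key point is that Theorem \ref{thm1} gives $b^{2}=S/4$, and the nowhere-flat assumption means $b\neq 0$ everywhere, so $S>0$ on all of $M$; consequently $S\equiv 4/3$.

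The proof is essentially a one-step maximum-principle argument, the heavy lifting being already packaged into Theorem \ref{thm1} and Proposition \ref{prop3}. There is no real obstacle once one notices the sign structure of the two right-hand sides; the only mild subtlety is the interplay in the second case, where it is precisely the rigidity $b^{2}=S/4$ from Theorem \ref{thm1} that both forces $S>0$ and selects the value $4/3$ rather than $0$ as the conclusion in the nowhere-flat regime.
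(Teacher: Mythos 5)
Your argument is correct and is essentially identical to the paper's own proof: both cases use the same sign analysis of $\tfrac12\Delta S=P+(2-S)S$ (flat case) and $\tfrac12\Delta S=P-\tfrac12 S(3S-4)$ (nowhere flat case) followed by integration over the closed surface, and your observation that $b^{2}=S/4$ with $b\neq 0$ forces $S\equiv 4/3$ rather than $0$ is a correct elaboration of the step the paper leaves implicit. The only detail worth stating explicitly is that $S\leq 4/3<2$ gives $K=1-S/2>0$, which is needed to invoke Proposition \ref{prop3}.
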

\begin{proof}
 If the normal bundle is flat, it follows from Remark \ref{rem1} and $0\leq S\leq {4}/{3}$ that
$$\frac{1}{2}\Delta S=P+(2-S)S\geq (2-S)S\geq0.$$
By integration,  we have $S=0$.

  If the normal bundle is nowhere flat,  the assumption $0\leq S \leq
 4/3$ implies that the Gauss curvature of the minimal surface is
 positive. It follows from (\ref{eq1}) in Proposition \ref{prop3} that
\begin{eqnarray}\label{29}
\frac{1}{2}\Delta
S=P-\frac{1}{2}S(3S-4)\geq-\frac{1}{2}S(3S-4)\geq0.
\end{eqnarray}
So we have $S(3S-4)=0$, it follows that
 $S=4/3$. We complete the proof of Theorem \ref{thm2}.
 \end{proof}

\begin{theorem}\label{thm3}
Let $M$ be a closed minimal surface immersed in $S^{n}$ with flat or
nowhere flat normal bundle. If ${4}/{3}\leq S \leq {5}/{3}$, then
$S={4}/{3}$ or $S={5}/{3}$.
\end{theorem}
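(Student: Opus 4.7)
The argument splits into the flat and nowhere-flat normal bundle cases, paralleling Theorem~\ref{thm2}. For the flat case, Remark~\ref{rem1} gives $\tfrac{1}{2}\Delta S = P + (2-S)S$. Under $4/3 \leq S \leq 5/3$ we have $S > 0$ and $2-S > 1/3 > 0$, so $(2-S)S > 0$ pointwise, and together with $P \geq 0$ this makes $\Delta S$ strictly positive on $M$. Integration over the closed surface then contradicts $\int_{M}\Delta S = 0$, so the flat case cannot occur under the hypothesis and this subcase is vacuous.

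For the nowhere-flat case, Proposition~\ref{prop3} gives $\tfrac{1}{2}\Delta S = P - \tfrac{1}{2}S(3S-4)$; unlike Theorem~\ref{thm2}, the sign of $S(3S-4)$ is now nonnegative, so no direct sign conclusion about $\Delta S$ is available. The critical additional input is Theorem~\ref{thm1}, which supplies the sharp identities $\lambda^{3}_{1} = -\lambda^{4}_{2} = -\tfrac{1}{4\sqrt{S}}S_{2}$ and $\lambda^{3}_{2} = \lambda^{4}_{1} = \tfrac{1}{4\sqrt{S}}S_{1}$; these yield
\[
 P \;=\; \frac{|\nabla S|^{2}}{2S} \;+\; 4\sum_{\gamma=5}^{n}\bigl((\lambda^{\gamma}_{1})^{2} + (\lambda^{\gamma}_{2})^{2}\bigr) \;\geq\; \frac{|\nabla S|^{2}}{2S}.
\]
The plan is then to multiply the identity of Proposition~\ref{prop3} by a weight $\varphi(S) \geq 0$ that vanishes at $S = 4/3$ and $S = 5/3$, such as $\varphi(S) = (S-4/3)(5/3-S)$ or a modification of the form $(5/3-S)S^{a}$ for a suitable exponent $a$, integrate over $M$, apply integration by parts to the $\Delta S$-term, and invoke the lower bound $P \geq |\nabla S|^{2}/(2S)$ to control the resulting $P$-term. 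The goal is an integral inequality of the shape
\[
 \int_{M}(S-4/3)(5/3-S)\,\Psi \;\leq\; 0
\]
with $\Psi \geq 0$, which combined with the pointwise nonnegativity of $(S-4/3)(5/3-S)$ forces this product to vanish identically; continuity of $S$ and connectedness of $M$ then yield $S \equiv 4/3$ or $S \equiv 5/3$.

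The main obstacle is to choose the weight $\varphi(S)$ so that the $|\nabla S|^{2}$-contributions from integration by parts and from the bound $P \geq |\nabla S|^{2}/(2S)$ combine with matching coefficients, and so that the residual nonnegative term $4\sum_{\gamma \geq 5}\bigl((\lambda^{\gamma}_{1})^{2} + (\lambda^{\gamma}_{2})^{2}\bigr)$, which has no a priori upper bound, does not destroy the desired sign. A straightforward linear weight such as $(S-4/3)$ or $(5/3-S)$ only recovers the consequences of $\int_{M}\Delta S = 0$ together with Gauss--Bonnet (using $\int_{M}K = 4\pi$ since $K > 0$ forces $M \cong S^{2}$), so the correct combination almost certainly involves both endpoint factors together with a power of $S$ matching the $\tfrac{1}{2S}$ coefficient in the $P$-bound. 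Should first-order test functions be insufficient, the fallback is to integrate the second-order Simons-type identity underlying equation~\eqref{tt} in the proof of Theorem~\ref{thm1}, which expresses $\sum(h^{\alpha}_{ijk}\Delta h^{\alpha}_{ij})_{k}$ in terms of $S$, $P$, $|\nabla S|^{2}$ and a curvature polynomial, and then exploit the orthogonality relations $\sum_{\gamma \geq 5}\lambda^{\gamma}_{1}\lambda^{\gamma}_{2} = 0$ and $\sum_{\gamma \geq 5}\bigl((\lambda^{\gamma}_{1})^{2} - (\lambda^{\gamma}_{2})^{2}\bigr) = 0$ from~\eqref{13} to extract the cancellation needed to control the higher-codimension contribution. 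This is precisely the place where the refined frame of Theorem~\ref{thm1} should pay off.
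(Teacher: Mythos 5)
Your flat-normal-bundle case is correct and identical to the paper's: under $4/3\leq S\leq 5/3$ the identity of Remark~\ref{rem1} gives $\Delta S>0$ pointwise, contradicting $\int_M\Delta S=0$. The nowhere-flat case, however, is a plan rather than a proof, and its primary route cannot be made to work. Multiplying $\tfrac12\Delta S=P-\tfrac12 S(3S-4)$ by a weight $\varphi(S)$ and integrating by parts yields
\begin{equation*}
\frac12\int_M\varphi(S)\,S(3S-4)\;=\;\int_M\varphi(S)\,P\;+\;\frac12\int_M\varphi'(S)\,|\nabla S|^2 .
\end{equation*}
To conclude you would need the right-hand side to be $\leq 0$ for some $\varphi\geq 0$ (since $S(3S-4)\geq 0$ on the pinching interval), but $P$ enters with the weight $\varphi\geq 0$ and is only bounded \emph{below} by $|\nabla S|^2/(2S)$; the residual contribution $4\sum_{\gamma\geq 5}\bigl((\lambda^{\gamma}_1)^2+(\lambda^{\gamma}_2)^2\bigr)$ admits no pointwise upper bound in terms of first-order data, exactly as you note yourself. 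No choice of $\varphi$ fixes this: the first-order identity, integrated with weight $1$, only gives $\int_M P=\tfrac12\int_M S(3S-4)\geq 0$, which is consistent with any $S$ in $[4/3,5/3]$. The information that distinguishes the endpoints is genuinely second order.

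Your ``fallback'' is the paper's actual argument, but the essential content is absent from your proposal. What is needed is: (i) the Simons-type identity for $\sum h^{\alpha}_{ijk}\Delta h^{\alpha}_{ijk}$ obtained from \eqref{bb}, which after substituting the curvature data \eqref{0}--\eqref{2} of the refined frame collapses to
\begin{equation*}
\sum h^{\alpha}_{ijk}\Delta h^{\alpha}_{ijk}
=\sum\bigl(h^{\alpha}_{ijk}\Delta h^{\alpha}_{ij}\bigr)_k+\tfrac52\Delta S-\tfrac34\Delta S^2+\tfrac12|\nabla S|^2-\tfrac14 S(3S-4)(9S-14);
\end{equation*}
(ii) the lower bound $Q\geq\tfrac14 S(3S-4)^2$ for the second covariant derivatives, obtained by minimizing $Q$ subject to the Ricci constraints \eqref{e3} (in particular $\lambda^{3}_{12}-\lambda^{3}_{21}=\tfrac14\sqrt{S}(3S-4)$), which is where the relations \eqref{13} and \eqref{20} from Theorem~\ref{thm1} actually pay off; and (iii) integration of $\tfrac12\Delta P=\sum h^{\alpha}_{ijk}\Delta h^{\alpha}_{ijk}+Q$ over $M$, whose divergence terms all vanish by Stokes, leaving $0\geq-\tfrac12\int_M S(3S-4)(3S-5)$. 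Since $S(3S-4)(3S-5)\leq 0$ on $[4/3,5/3]$, this forces $S(3S-4)(3S-5)\equiv 0$ and hence, by continuity and connectedness, $S\equiv 4/3$ or $S\equiv 5/3$. Until steps (i)--(iii) are carried out, the nowhere-flat case remains unproved.
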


\begin{proof}
If the normal bundle is flat, from the condition ${4}/{3}\leq S \leq
{5}/{3}$, we have $$\frac{1}{2}\Delta S=P+(2-S)S\geq (2-S)S>0.$$ By
integration, we get a contradiction.

If the normal bundle is nowhere flat, it follows from ${4}/{3}\leq S
\leq {5}/{3}$ that the Gauss curvature is positive. So we can use
the frame field introduced in Theorem \ref{thm1}. From (\ref{bb}),
we have
\begin{eqnarray}\label{a3}
\sum_{i,j,k,\alpha}h_{ijk}^{\alpha}\Delta h_{ijk}^{\alpha}
&=&\sum_{i,j,k,\alpha}(h_{ijk}^{\alpha}\Delta h_{ij}^{\alpha})_{k}
-\sum_{i,j,\alpha}(\Delta h_{ij}^{\alpha})^2+\sum_{i,j,k,p,m,\alpha}2h_{ijk}^{\alpha}h_{pj}^{\alpha}R_{pikm,m}\nonumber\\
&+&\sum_{i,j,k,p,m,\alpha}(h_{ijk}^{\alpha}h_{ijp}^{\alpha}R_{pmkm}+4h_{ijk}^{\alpha}h_{pjm}^{\alpha}R_{pikm})\nonumber\\
&+&\sum_{i,j,k,m,\alpha,\delta}2h_{ijk}^{\alpha}h_{ijm}^{\delta}R_{\delta\alpha
km}+\sum_{i,j,k,m,\alpha,\delta}h_{ijk}^{\alpha}h_{ij}^{\delta}R_{\delta\alpha
km,m}.
\end{eqnarray}
From (\ref{m}), we have
\begin{eqnarray}\label{b3}
\sum_{i,j,\alpha}(\Delta
h_{ij}^{\alpha})^2=(2-S)^2S+2(5S-8)b^2\overline{S}
=\frac{1}{4}S(3S-4)^2.
\end{eqnarray}
From (\ref{0}), we have
\begin{eqnarray}\label{10}
2\sum_{i,j,k,p,m,\alpha}h_{ijk}^{\alpha}h_{pj}^{\alpha}R_{pikm,m}=-\sum_{i,j,k,\alpha}h_{ij}^{\alpha}h_{ijk}^{\alpha}S_{k}=-\frac{1}{2}|\nabla
S|^2,
\end{eqnarray}
and
\begin{eqnarray}\label{9}
\sum_{i,j,k,p,m,\alpha}(4h_{ijk}^{\alpha}h_{pjm}^{\alpha}R_{pikm}
+h_{ijk}^{\alpha}h_{ijp}^{\alpha}R_{pmkm})=5(1-\frac{S}{2})P.
\end{eqnarray}
From (\ref{1}), we have
\begin{eqnarray}\label{dd6}
2\sum_{i,j,k,m,\alpha,\delta}h_{ijk}^{\alpha}h_{ijm}^{\delta}R_{\delta\alpha
km}=16(\lambda^{3}_{1}\lambda^{4}_{2}
-\lambda^{3}_{2}\lambda^{4}_{1})R_{4312}=-\frac{1}{2}|\nabla S|^2.
\end{eqnarray}
From (\ref{2}), we have
\begin{eqnarray*}
\sum_{i,j,k,m,\alpha,\delta}h_{ijk}^{\alpha}h_{ij}^{\delta}R_{\delta\alpha
km,m}=-\frac{1}{2}S\sum_{i,j,k}\sum_{\gamma=5}^{n}(h_{ijk}^{\gamma})^2-\frac{1}{4}|\nabla
S|^2,\label{dd3}
\end{eqnarray*}
which together with
\begin{eqnarray*}
\sum_{i,j,k}(h_{ijk}^{3})^2+\sum_{i,j,k}(h_{ijk}^{4})^2=\frac{1}{2S}|\nabla
S|^2
\end{eqnarray*}
 forces that
\begin{eqnarray}\label{11}
\sum_{i,j,k,m,\alpha,\delta}h_{ijk}^{\alpha}h_{ij}^{\delta}R_{\delta\alpha
km,m}=-\frac{1}{2}SP\label{dd5}.
\end{eqnarray}
Substituting (\ref{b3}),(\ref{10}), (\ref{9}),  (\ref{dd6}) and
(\ref{11}) into (\ref{a3}), we get
\begin{eqnarray}\label{dd}
\sum_{i,j,k,\alpha}h_{ijk}^{\alpha}\Delta h_{ijk}^{\alpha}
&=&\sum_{i,j,k,\alpha}(h_{ijk}^{\alpha}\Delta
h_{ij}^{\alpha})_{k}+\frac{5}{2}\Delta S-\frac{3}{4}\Delta
S^2\nonumber\\
&+&\frac{1}{2}|\nabla S|^2-\frac{1}{4}S(3S-4)(9S-14).
\end{eqnarray}

On the other hand, It follows from (\ref{38}) and (\ref{20}) that
$$Q=8\left((\lambda_{11}^{3})^{2}+(\lambda_{12}^{3})^{2}+(\lambda_{21}^{3})^{2}
+(\lambda_{22}^{3})^{2}\right)+4\sum_{\beta=5}^{n}\left((\lambda_{11}^{\beta})^{2}+(\lambda_{12}^{\beta})^{2}
+(\lambda_{21}^{\beta})^{2}+(\lambda_{22}^{\beta})^{2}\right).$$ It
is easy to check that the relative minimal value Q with the
constraint (\ref{e3}) is $\frac{1}{4}S(3S-4)^2$, which together with
(\ref{dd}) forces that
\begin{eqnarray}\label{gg}
\frac{1}{2}\Delta P &=& \sum_{i,j,k,\alpha}h_{ijk}^{\alpha}\Delta
h_{ijk}^{\alpha}
+\sum_{i,j,k,l,\alpha}(h_{ijkl}^{\alpha})^{2}\nonumber\\
&\geq&\sum_{i,j,k,\alpha}(h_{ijk}^{\alpha}\Delta
h_{ij}^{\alpha})_{k}+\frac{5}{2}\Delta S-\frac{3}{4}\Delta
S^2-\frac{1}{2}S(3S-4)(3S-5).
\end{eqnarray}
Taking integration over $M$ on both sides of (\ref{gg}) and using
the Stokes formula, we have
\begin{eqnarray*}
0\geq-\int_{M}\frac{1}{2}S(3S-4)(3S-5).
\end{eqnarray*}
It follows that $S={4}/{3}$ or $S={5}/{3}$ if ${4}/{3}\leq S \leq
{5}/{3}$. We complete the proof of Theorem \ref{thm3}.
\end{proof}
Now an important result can be obtained instantly as follows.
\begin{theorem}\label{thm4}
 Let $M$ be a closed minimal surface
immersed in $S^{n}$ with positive Gauss curvature $K$ and flat or
nowhere flat normal bundle. Then $K+K^{N}=1$, i.e. $M$ is a minimal
Wintgen ideal surface.
\end{theorem}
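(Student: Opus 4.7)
My plan is to treat the two normal-bundle cases separately and observe that each collapses to a direct computation, since the substantive work has already been done in Theorem \ref{thm1}, Proposition \ref{prop2}, Remark \ref{rem1}, and the simplified curvature identities \eqref{0}--\eqref{1}. The overarching identity I want to exploit is the Gauss equation for a minimal surface in $S^{n}$, which immediately gives $2K=2-S$, so the target $K+K^{N}=1$ is equivalent to $K^{N}=S/2$.

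First, suppose the normal bundle of $M$ is flat. Then $R_{\alpha\beta ij}\equiv 0$, so $K^{N}\equiv 0$, and I just need $S\equiv 0$. The hypothesis $K>0$ gives $S<2$ everywhere, hence $(2-S)S\ge 0$; combined with $P\ge 0$, Remark \ref{rem1} yields
\begin{equation*}
\tfrac12\Delta S=P+(2-S)S\ge 0.
\end{equation*}
Since $M$ is closed, a subharmonic function is constant, so integrating forces $P=0$ and $(2-S)S=0$; the strict bound $S<2$ then gives $S\equiv 0$, hence $K\equiv 1$ and $K+K^{N}=1$.

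Next, suppose the normal bundle is nowhere flat. Since $K>0$, Theorem \ref{thm1} produces an orthonormal normal frame $\{e_{3},\dots,e_{n}\}$ in which the shape operators take the simple forms stated there, and in particular the simplified identities \eqref{0}--\eqref{1} hold. From \eqref{1}, the only components of the normal curvature tensor $R_{\alpha\beta ij}$ that can be nonzero are those obtained from $R_{3412}=-S/2$ by the antisymmetries $R_{\alpha\beta ij}=-R_{\beta\alpha ij}=-R_{\alpha\beta ji}$; all the components $R_{3\beta12}$, $R_{4\beta12}$, $R_{\beta\gamma12}$ with $\beta,\gamma\ge 5$ vanish. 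Consequently
\begin{equation*}
\sum_{(\alpha,\beta,i,j)}(R_{\alpha\beta ij})^{2}=4\bigl(R_{3412}\bigr)^{2}=S^{2},
\end{equation*}
so $K^{N}=\tfrac12\sqrt{S^{2}}=S/2$. Combined with $K=1-S/2$ this gives $K+K^{N}=1$.

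There is no real obstacle beyond correctly reading off which components of $R_{\alpha\beta ij}$ survive in the Theorem \ref{thm1} frame and counting their index-symmetry copies; both of these are immediate from \eqref{1}. The only subtlety worth flagging is that the flat-bundle argument uses strict positivity $K>0$ (equivalently $S<2$) to rule out the spurious root $S=2$ of $(2-S)S=0$; this is why the theorem is stated with the positive Gauss curvature hypothesis rather than merely nonnegative.
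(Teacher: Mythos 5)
Your proposal is correct and follows essentially the same route as the paper: the flat case is handled by integrating $\tfrac12\Delta S=P+(2-S)S\ge 0$ from Remark \ref{rem1} and using $S<2$ to force $S\equiv 0$, and the nowhere-flat case reads off $K^{N}=S/2$ from the frame of Theorem \ref{thm1} via \eqref{1}. The only difference is that you spell out the count $\sum(R_{\alpha\beta ij})^{2}=4(R_{3412})^{2}=S^{2}$, which the paper leaves implicit.
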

\begin{proof}
If the normal bundle is flat, since the Gauss curvature of $M$ is
positive, we have $S<2$. It follows from Remark \ref{rem1} that
$$\frac{1}{2}\Delta S=P+(2-S)S\geq (2-S)S\geq0.$$
 It follows that $S=0$ and $K=1$, so the Gauss curvature $K$ and the normal curvature
 $K^{N}$ satisfy $K+K^{N}=1$.

 If the normal bundle is nowhere flat, it follows
from formula \eqref{1} that the normal curvature $K^{N}=S/2=1-K$, so
$K+K^{N}=1$. This completes the proof of Theorem \ref{thm4}.
\end{proof}

\begin{remark}
In submanifolds theory, the famous DDVV inequality is related to the
scalar curvature, normal scalar curvature and mean curvature, which
is proved by Ge-Tang \cite{getang} and Lu \cite{Lu}, independently.
Submanifolds are called Wintgen ideal when the equality in the DDVV
inequality holds true exactly. By the equality characterization of
the DDVV inequality proved by Ge and Tang \cite{getang}, the shape
operators in Theorem \ref{thm1} could attain the equality of DDVV
inequality. In this way, Theorem \ref{thm4} can be also deduced.
\end{remark}
\begin{remark}
There are also some important properties concerning the sum of the
Gauss curvature and normal curvature for closed surfaces immersed in
space forms (see \cite{pengtang}).
\end{remark}
As we know very well, there are lots of results concerning the
pinching of the second fundamental form in a unit sphere $S^{n}$.
But there are little results concerning the pinching of the normal
curvature. In the following part, we will provide some new results
of closed surfaces immersed in arbitrary dimensional unit sphere
$S^{n}$ depending on a pinching of the intrinsic and normal
curvature.
\begin{theorem}\label{thm7}
 Let $M$ be a closed surface minimally
immersed in $S^{n}$ with flat or nowhere flat normal bundle
satisfying $K^{N}\leq2K$.\\
\indent$(1)$ If $K^{N}=0$, then  either\\
\indent\indent $(a)$ $S=0$ and the surface is the geodesic sphere, or \\
\indent\indent $(b)$ $S=2$ and the surface is the Clifford torus.\\
\indent$(2)$ If $K^{N}\neq0$, then $K^{N}=2K$ and the surface is the
Veronese surface in $S^{4}$.
\end{theorem}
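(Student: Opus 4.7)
The plan is to split by the value of $K^{N}$ and in each case reduce to the scalar pinching already handled in Theorem \ref{thm2}, using as the key lever the identity $K+K^{N}=1$ from Theorem \ref{thm4}. Throughout I use that $2K=2-S$ and that $K^{N}\geq 0$, so the hypothesis $K^{N}\leq 2K$ automatically forces $K\geq 0$, i.e. $S\leq 2$.

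For part (1), I would first observe that $K^{N}=\tfrac{1}{2}\sqrt{\sum R_{\alpha\beta ij}^{2}}=0$ identically means the normal bundle is flat. Under this hypothesis Remark \ref{rem1} gives
$$\tfrac{1}{2}\Delta S = P+(2-S)S \geq (2-S)S \geq 0,$$
and integrating over the closed surface $M$ forces $P\equiv 0$ and $S(2-S)\equiv 0$. Hence $S$ is constant with $S\in\{0,2\}$. The case $S=0$ is totally geodesic, giving the geodesic $2$-sphere. The case $S=2$ yields a minimal surface with parallel second fundamental form and flat normal bundle; the classical rigidity result of Chern-do Carmo-Kobayashi / Lawson then identifies $M$ as the Clifford torus $S^{1}(1/\sqrt{2})\times S^{1}(1/\sqrt{2})\subset S^{3}$.

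For part (2), suppose $K^{N}\neq 0$. Then the normal bundle fails to be flat at some point, and by the dichotomy ``flat or nowhere flat'' it must be nowhere flat everywhere. From $K^{N}>0$ and $K^{N}\leq 2K$ we get $K>0$ pointwise, so Theorem \ref{thm1} applies and Theorem \ref{thm4} yields $K+K^{N}=1$. Plugging this into $K^{N}\leq 2K$ gives $1-K\leq 2K$, i.e. $K\geq 1/3$, equivalently $S\leq 4/3$. With $S\geq 0$, Theorem \ref{thm2} then forces $S=0$ or $S=4/3$; the former would give $K^{N}=0$, contradicting the assumption, so $S=4/3$ everywhere and $K^{N}=2K=2/3$.

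To identify $M$ as the Veronese surface I would execute a codimension reduction. Since $S\equiv 4/3$ is constant, $S_{i}=0$, so from (\ref{14}) $\lambda^{3}_{1}=\lambda^{3}_{2}=0$, and Proposition \ref{prop3} gives $P=0$; hence $\lambda^{\gamma}_{1}=\lambda^{\gamma}_{2}=0$ for all $\gamma\geq 5$. The expressions for $\omega_{3\gamma}$ and $\omega_{4\gamma}$ obtained in the proof of Theorem \ref{thm1} then vanish, so $\mathrm{span}\{e_{3},e_{4}\}$ is a parallel rank-$2$ subbundle of $N(M)$ containing the image of the second fundamental form. By Erbacher's reduction-of-codimension theorem, $M$ lies in a totally geodesic $S^{4}\subset S^{n}$, and inside $S^{4}$ it is a closed minimal surface with constant Gauss curvature $1/3$; the Calabi / Chern-do Carmo-Kobayashi classification then pins it down as the Veronese surface. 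The main obstacle is precisely these two classification steps: the pinching arithmetic is almost immediate from the prior theorems, whereas in Part (1) one must upgrade ``parallel $II$ + flat normal bundle + $S=2$'' to the Clifford torus, and in Part (2) one must justify the drop in codimension carefully before appealing to the rigidity of the Veronese immersion.
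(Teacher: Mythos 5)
Your proposal is correct and follows essentially the same route as the paper: split on whether the normal bundle is flat (equivalently $K^{N}\equiv 0$), use Remark \ref{rem1} and integration in the flat case, and in the nowhere flat case combine $K+K^{N}=1$ from Theorem \ref{thm4} with Theorem \ref{thm2} to force $S=4/3$. The only difference is that you supply the final classification steps (Chern--do Carmo--Kobayashi for $S=2$, and reduction of codimension plus Calabi's rigidity for $S=4/3$) which the paper asserts without detail; these additions are correct and strengthen rather than alter the argument.
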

\begin{proof}
If the normal bundle is flat, we have $K^{N}=0$ and the Gauss
curvature $K$ is nonnegative. It follows from Remark \ref{rem1} that
$$\frac{1}{2}\Delta S=P+(2-S)S\geq (2-S)S\geq0.$$
So either $S=0$ and the surface is  a geodesic sphere, or $S=2$ and
the surface is the Clifford torus.

 If the normal bundle is nowhere
flat, we have that the Gauss curvature $K$ is positive from
$K^{N}\leq2K$. It follows from $K+K^{N}=1$ in Theorem \ref{thm4}
that the assumption $K^{N}\leq2K$ is equivalent to
 $S\leq 4/3$. By Theorem
\ref{thm2} we get $S=4/3$ and $M$ is the Veronese surface in
$S^{4}$.
\end{proof}

\begin{remark}
Theorem \ref{thm7} generalizes Theorem \ref{thm77} obtained by Baker
and Nguyen \cite{baker} to arbitrary codimension for the surface
with flat normal bundle or nowhere flat normal bundle.
\end{remark}

\begin{theorem}\label{thm8}
 Let $M$ be a closed surface minimally
immersed in $S^{n}$ with positive Gauss curvature satisfying $2K\leq K^{N}\leq5K$, then either\\
$(1)$ $K^{N}=2K$ and it is the Veronese
surface in $S^{4}$, or\\
$(2)$ $K^{N}=5K$ and it is the generalized Veronese surface in
$S^{6}$.
\end{theorem}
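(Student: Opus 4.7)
The plan is to use Theorem~\ref{thm4} to convert the hypothesis $2K\leq K^{N}\leq 5K$ into a two-sided pinching on the extrinsic invariant $S$, and then invoke Theorem~\ref{thm3} together with the Calabi--Chern classification of constant-curvature minimal $2$-spheres.

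First I would observe that the normal bundle of $M$ must be nowhere flat. Indeed, $K>0$ together with $K^{N}\geq 2K$ forces $K^{N}>0$ at every point, which rules out any flat point of the normal bundle. Hence Theorem~\ref{thm4} applies and gives the identity $K+K^{N}=1$. Combined with the Gauss equation $2K=2-S$ for a minimal surface, this yields $S=2K^{N}=2(1-K)$. Substituting $K^{N}=1-K$ into the inequalities $2K\leq K^{N}\leq 5K$ turns them respectively into $K\leq 1/3$ and $K\geq 1/6$, equivalently
\begin{equation*}
\frac{4}{3}\,\leq\, S\,\leq\, \frac{5}{3}.
\end{equation*}
This is exactly the pinching hypothesis of Theorem~\ref{thm3}, which then forces $S=4/3$ or $S=5/3$. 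In either case $S$, and therefore $K$, is a positive constant on the closed surface $M$.

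To finish, I would invoke the Calabi--Chern classification (recalled in the paragraph preceding Theorem~\ref{thm2}): a closed minimal surface in $S^{n}$ with positive constant Gauss curvature must be a minimal two-sphere with $K=K(s)=2/(s(s+1))$ for some $s\in\mathbb{N}$, and the linearly full immersion is congruent to the $s$-th standard minimal immersion into $S^{2s}$. The values $K=1/3$ and $K=1/6$ correspond to $s=2$ and $s=3$, identifying $M$ as the Veronese surface in $S^{4}$ in the case $K^{N}=2K$ and as the generalized Veronese surface in $S^{6}$ in the case $K^{N}=5K$. The substantive analytic content is already packaged in Theorems~\ref{thm3} and~\ref{thm4}; the only delicate point in the present argument is the initial verification that the normal bundle is nowhere flat, since this is what allows the identity $K+K^{N}=1$ to be applied and the curvature pinching to be translated into a pinching on $S$.
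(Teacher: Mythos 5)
Your proposal is correct and follows essentially the same route as the paper: deduce from $K>0$ and $K^{N}\geq 2K$ that $K^{N}>0$ everywhere (so the normal bundle is nowhere flat and Theorem \ref{thm4} applies), translate $2K\leq K^{N}\leq 5K$ via $K+K^{N}=1$ and $2K=2-S$ into $4/3\leq S\leq 5/3$, and conclude with Theorem \ref{thm3}. The only difference is that you spell out the nowhere-flat verification and the final Calabi--Chern identification more explicitly than the paper does.
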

\begin{proof}
It follows from $2K\leq K^{N}\leq5K$ and the Gauss curvature $K$ is
positive that the normal curvature is positive everywhere. It
follows from $K+K^{N}=1$ in Theorem \ref{thm4} that $2K\leq
K^{N}\leq5K$ is equivalent to $4/3\leq S\leq 5/3$. By Theorem
\ref{thm3} we get $S=4/3$ and $M$ is the Veronese surface in
$S^{4}$, or $S=5/3$ and $M$ is the generalized Veronese surface in
$S^{6}$.
\end{proof}

 Next we
will give some new results concerning the normal curvature.

\begin{theorem}\label{thm5}
 Let $M$ be a closed minimal surface immersed in $S^{n}$ with positive Gauss
 curvature and flat or nowhere flat normal bundle. If $0\leq K^{N}\leq2/3$, then either\\
$(1)$ $K^{N}=0$ and it is a geodesic sphere, or\\
$(2)$ $K^{N}=2/3$ and it is the Veronese surface in $S^{4}$.
\end{theorem}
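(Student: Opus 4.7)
The strategy is to combine Theorem \ref{thm4} with Theorem \ref{thm2}. Since Theorem \ref{thm4} asserts $K+K^{N}=1$ on the two classes of surfaces under consideration, the hypothesis $0\le K^{N}\le 2/3$ is equivalent to a pinching of the squared norm $S$ of the second fundamental form, and Theorem \ref{thm2} handles exactly that pinching. I would therefore split the proof into the usual two cases: flat normal bundle and nowhere-flat normal bundle.

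In the flat normal bundle case, we automatically have $K^{N}\equiv 0$, so only conclusion $(1)$ can occur. Since the Gauss curvature is positive, $S<2$, and Remark \ref{rem1} gives
\[
\tfrac{1}{2}\Delta S=P+(2-S)S\ge (2-S)S\ge 0.
\]
Integrating over the closed surface $M$ and using Stokes' formula forces $S\equiv 0$, so $M$ is totally geodesic in $S^{n}$; this is the geodesic sphere of conclusion $(1)$.

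In the nowhere-flat case, formula \eqref{1} yields $R_{3412}=-S/2$ while the other components of the normal curvature tensor vanish, so $K^{N}=S/2$. Hence the nowhere-flat hypothesis forces $S>0$ everywhere, and the assumption $K^{N}\le 2/3$ translates to $0<S\le 4/3$. Applying Theorem \ref{thm2} to this pinching rules out $S\equiv 0$ and leaves $S\equiv 4/3$, i.e.\ $K^{N}=2/3$. The classical rigidity statement (the $s=2$ case of the Simons conjecture, which underlies Theorem \ref{thm2}) then identifies $M$ with the Veronese surface in $S^{4}$, giving conclusion $(2)$.

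There is essentially no obstacle here beyond careful bookkeeping: the entire content is a translation via the identity $K+K^{N}=1$, and the heavy analytic work has already been done in Theorem \ref{thm2}. The only minor point worth checking is that the dichotomy between the two conclusions matches the dichotomy between the two normal-bundle hypotheses, namely that $(1)$ occurs exactly when the normal bundle is flat and $(2)$ exactly when it is nowhere flat.
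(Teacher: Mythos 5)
Your proposal is correct and follows essentially the same route as the paper: use the identity $K+K^{N}=1$ from Theorem \ref{thm4} to translate the hypothesis $0\le K^{N}\le 2/3$ into the pinching $0\le S\le 4/3$, and then invoke Theorem \ref{thm2} to conclude $S=0$ or $S=4/3$, with the standard rigidity identifying the two cases as the geodesic sphere and the Veronese surface. Your explicit split into the flat and nowhere-flat cases (and the observation that the nowhere-flat hypothesis forces $S>0$, so only $S=4/3$ survives there) is just a slightly more detailed bookkeeping of what the paper does in one line.
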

\begin{proof}
Since the Gauss curvature $K$ is positive and the normal bundle is
flat or nowhere flat, it follows from Theorem \ref{thm4} that
$K+K^{N}=1$. So $0\leq K^{N}\leq2/3$ is equivalent to
 $0\leq S\leq 4/3$. By Theorem
\ref{thm2} we get $S=0$ and $M$ is a geodesic sphere, or $S=4/3$ and
$M$ is the Veronese surface in $S^{4}$.
\end{proof}
\begin{theorem}\label{thm6}
 Let $M$ be a closed minimal surface
immersed in $S^{n}$ with positive Gauss curvature. If $2/3\leq
K^{N}\leq 5/6$,
then\\
$(1)$ $K^{N}=2/3$ and $M$ is the Veronese surface in
$S^{4}$; or\\
$(2)$ $K^{N}=5/6$ and $M$ is a generalized Veronese surface in
$S^{6}$.
\end{theorem}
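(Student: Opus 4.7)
The plan is to reduce Theorem \ref{thm6} to the already established extrinsic pinching Theorem \ref{thm3} by invoking the pointwise identity $K+K^{N}=1$ provided by Theorem \ref{thm4}. First, I would observe that the hypothesis $K^{N}\geq 2/3$ forces the normal curvature to be strictly positive at every point of $M$; in particular the normal bundle is never flat at any point, so the ``flat or nowhere flat'' dichotomy of Theorem \ref{thm4} lands in the nowhere flat alternative. Combined with the assumption of positive Gauss curvature, this lets me apply Theorem \ref{thm4} to obtain $K+K^{N}=1$ globally on $M$.

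Next I would translate the normal curvature pinching into a pinching on the squared norm $S$ of the second fundamental form. Since $M$ is minimal in $S^{n}$, the Gauss equation together with \eqref{0} gives $K=1-S/2$, so $K^{N}=1-K=S/2$. Thus the inequality $2/3\leq K^{N}\leq 5/6$ is equivalent to
\begin{equation*}
\tfrac{4}{3}\ \leq\ S\ \leq\ \tfrac{5}{3}.
\end{equation*}

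With this reformulation in hand, Theorem \ref{thm3} applies directly and forces $S=4/3$ or $S=5/3$. In the first case $K^{N}=2/3$, and the $s=2$ instance of the Simon conjecture (proved in \cite{L.H.B,B.K,K.M} and recalled in the discussion preceding Theorem \ref{thm2}) identifies $M$ as the Veronese surface in $S^{4}$; in the second case $K^{N}=5/6$ and the $s=3$ instance identifies $M$ as the generalized Veronese surface in $S^{6}$.

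Since virtually all of the analytic work has already been packaged into Theorems \ref{thm3} and \ref{thm4}, the argument I expect is only a few lines long. The one subtle point that needs a careful sentence is the first: one has to observe that a positive lower bound on $K^{N}$ precludes $b$ from vanishing at any point, so the nowhere flat branch of Theorem \ref{thm4} really does apply on all of $M$ and not merely on an open dense subset. This is the main (and rather mild) obstacle; once this dichotomy is disposed of, the proof is a short chain of equivalences terminating in Theorem \ref{thm3}.
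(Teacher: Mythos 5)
Your proposal is correct and follows essentially the same route as the paper: note that $K^{N}\geq 2/3>0$ forces the normal bundle to be nowhere flat, invoke Theorem \ref{thm4} to get $K+K^{N}=1$ and hence $K^{N}=S/2$, translate the hypothesis into $4/3\leq S\leq 5/3$, and conclude via the extrinsic pinching theorem. (In fact your write-up is slightly more careful than the paper's, which contains two typos in this proof --- it writes the pinching as $0\leq K^{N}\leq 2/3$ and cites Theorem \ref{thm2} where Theorem \ref{thm3} is meant.)
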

\begin{proof}
We observe that the condition $2/3\leq K^{N}\leq5/6$ implies that
the normal bundle is nowhere flat. It follows from Theorem
\ref{thm4} that $K+K^{N}=1$. So $0\leq K^{N}\leq2/3$ is equivalent
to
 $4/3\leq S\leq 5/3$. By Theorem
\ref{thm2} we get $S=4/3$ and $M$ is the Veronese surface in
$S^{4}$, or $S=5/3$ and $M$ is the generalized Veronese surface in
$S^{6}$.
\end{proof}
We conclude this paper with the following theorem.
\begin{theorem}\label{thm10}
 Let $M$ be a closed minimal surface
immersed in $S^{n}$ with positive Gauss curvature. If $K^{N}$ is
non-zero constant everywhere on $M$, then $K$ is constant and the
immersion is one of the generalized Veronese surfaces.
\end{theorem}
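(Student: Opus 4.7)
The plan is to combine the identity $K+K^N=1$ from Theorem \ref{thm4} with the classical classification of minimally immersed $2$-spheres of constant Gauss curvature due to Calabi \cite{calabi} and do~Carmo--Wallach \cite{wallach}. First I would observe that since $K^N$ is a nonzero constant, the normal bundle of $M$ is nowhere flat: indeed, $K^N=\frac{1}{2}\sqrt{\sum (R_{\alpha\beta ij})^{2}}$ vanishes at a point precisely when the normal curvature tensor does, so the assumption $K^N\not\equiv 0$ together with its constancy forces $K^N\neq 0$ pointwise. Combined with the hypothesis that $K>0$, the conditions of Theorem \ref{thm4} are met, so $K+K^N=1$ on $M$. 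Since $K^N$ is constant, $K=1-K^N$ is a positive constant as well.

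Next I would fix the topology of $M$ using Gauss--Bonnet: positivity and constancy of $K$ give $K\cdot\mathrm{Area}(M)=2\pi\chi(M)>0$, so $\chi(M)=2$ and $M$ is topologically $S^{2}$. Now $M$ is a minimally immersed $2$-sphere in $S^{n}$ with constant Gauss curvature $K$, and Chern's theorem (recalled in the Introduction) forces $K=K_{s}=2/(s(s+1))$ for some $s\in\mathbb{N}$. The condition $K^N\neq 0$ rules out $s=1$ (where $K=1$, $S=0$, and $M$ would be an equator with flat normal bundle), so $s\geq 2$.

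Finally, Calabi's rigidity theorem (extended to arbitrary codimension by do~Carmo--Wallach \cite{wallach}) asserts that, up to a rigid motion of the ambient sphere, any such linearly full minimal immersion of $S^{2}$ with Gauss curvature $K_{s}$ is congruent to the $s$-th standard minimal immersion into $S^{2s}$, i.e.\ a generalized Veronese surface; a non-linearly-full immersion reduces to a standard one in a lower-dimensional totally geodesic subsphere. This completes the identification of $M$. There is no genuine obstacle beyond correctly invoking the pre-existing results: once Theorem \ref{thm4} is in hand, the reduction from a pointwise normal-curvature hypothesis to a constant-$K$ hypothesis is essentially automatic, and the rest is the classical Calabi/do~Carmo--Wallach classification.
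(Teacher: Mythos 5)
Your argument is correct and is evidently the intended one: the paper states Theorem \ref{thm10} without giving a proof, and the natural route is exactly yours --- a nowhere-vanishing constant $K^{N}$ forces the normal bundle to be nowhere flat, Theorem \ref{thm4} then gives $K=1-K^{N}$ constant and positive, and Chern's quantization $K=K_{s}$ together with the Calabi/do Carmo--Wallach rigidity of the standard minimal immersions identifies the surface. The only cosmetic point is the Gauss--Bonnet step: for a possibly non-orientable $M$ one only gets $\chi(M)\in\{1,2\}$, but the case $M\cong\mathbb{RP}^{2}$ (which is in fact realized by the Veronese surface) is handled by passing to the orientable double cover, so nothing is lost.
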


\section*{Acknowledgments}
The author was partially supported by Chern Institute of
Mathematics. The authors would like to thank the referees for their
professional suggestions about this paper which led to various
improvements.
% ------------------------------------------------------------------------

%\section*{Acknowledgments} We would like to thank the referee for giving us
%very valuable advice and comments.
% ------------------------------------------------------------------------
%%%%%%%%%%%%%%%%%%%%%%%%%%%%%%%%%%%%%%%%%%%
%%%%%%%%%%%%%%% References
%%%%%%%%%%%%%%%%%%%%%%%%%%%%%%%%%%%%%%%%%%%
\bibliographystyle{amsplain}

%%%%%%%%%%%%%%%%%%%%%%%%%%%%%%%%%%%%%%%%%%%%%%%%%%%%%%%%%%%%%%
%%%%%%%%%%%%%%%%%%%%%%%%%%%%%%%%%%%%%%%%%%%%%%%%%%%%%%%%%%%%%%
\end{document}